%
%
%
%
%

\documentclass[smallextended]{svjour3}       
\smartqed  
\usepackage{graphicx}
\usepackage{array}
%
%
\usepackage{rotating}
\usepackage{multirow}
\usepackage{amssymb}
\usepackage{amsmath}
\usepackage{algorithm}
\usepackage{algorithmic}
%
\newcommand{\conv}{\mathrm{conv}} 

\newcommand{\link}{\mathrm{link}}

\newcommand{\dom}{\mathrm{dom}}
\newcolumntype{C}{>{\centering\arraybackslash}m{0.5cm}<{}}
%
%

\begin{document}

\title{The flip-graph of the $4$-dimensional cube is connected}


\author{Lionel Pournin}

\institute{L. Pournin \at
             EFREI, 30-32 avenue de la R{\'e}publique, 94800 Villejuif, France\\
             \email{lionel.pournin@efrei.fr}\\
             EPFL - {\'E}cole Polytechnique F{\'e}d{\'e}rale de Lausanne, 1015 Lausanne, Switzerland\\
                           \email{lionel.pournin@epfl.ch}
}


\maketitle

\begin{abstract}
Flip-graph connectedness is established here for the vertex set of the 4-dimensional cube. It is found as a consequence that this vertex set has $92\,487\,256$ triangulations, partitioned into $247\,451$ symmetry classes.


\keywords{4-dimensional hypercube \and Tesseract \and Flip-graph connectivity \and Triangulations \and Enumeration \and TOPCOM \and $k$-regularity}
\end{abstract}

\section{Introduction}
\label{se.0}

It is well known that the vertex set of the $3$-dimensional cube has exactly $74$ triangulations, all regular, partitioned into $6$ symmetry classes \cite{deL96,DRS10}. The case of the $4$-dimensional cube turns out to be significantly more complicated. The first non-regular triangulation of its vertex set has been found almost two decades ago \cite{deL96}, and the total number of such triangulations was, up to now, unknown. The reason for this is that the only triangulation enumeration method efficient enough to be tractable in the case of the $4$-dimensional cube actually consists in exploring the flip-graph of its vertex set \cite{DRS10}. In this perspective, completely enumerating the triangulations of the vertex set of the $4$-dimensional cube is a task conditioned to the connectedness of this graph, which remained an open problem until now \cite{DRS10}.

The $4$-dimensional cube is identified hereafter with the polytope $[0,1]^4$ and its vertices with the elements of $\{0,1\}^4$. It is proven in this paper that the flip-graph of $\{0,1\}^4$ is connected, and found as a consequence, that the vertex set of the $4$-dimensional cube has $92\,487\,256$ triangulations, partitioned into $247\,451$ symmetry classes. Some of the results in the paper require computer assistance. These computations were mostly done using TOPCOM \cite{Ram02}.

The proof consists in finding paths in the flip-graph of $\{0,1\}^4$ from any triangulation to a corner-cut triangulation \cite{DRS10}. To this end, one needs to perform a sequence of flips that introduces new corner simplices in a triangulation $T$ of $\{0,1\}^4$. Consider the $3$-dimensional triangulation $U$ obtained when intersecting $T$ with a hyperplane that cuts the corner of $[0,1]^4$ with apex $x$. It is shown that one can choose $x$ so that, up to an isometry, $U$ belongs to a well characterized set of $1\,588$ triangulations. In addition, conditions are given such that flips in these triangulations carry over to $T$ and reduce the star of $\{x\}$ in $T$ when they are performed. Hence, the problem is narrowed to checking a simple property regarding flips over the above mentioned $1\,588$ triangulations. This verification is performed within minutes using a computer, and does not require any arithmetic operations.

The paper is organized as follows. In section \ref{se.2}, formal definitions of the mathematical objects involved in the problem are stated. Section \ref{se.3} presents preliminary results about the triangulations of $\{0,1\}^4$. It turns out, in particular, that the corner-cut triangulations of $\{0,1\}^4$ are regular. In section \ref{se.4}, it is shown how the flips in a triangulation $T$ of $\{0,1\}^4$ are related to flips in the triangulations obtained by intersecting $T$ with a hyperplane that cuts a corner of the $4$-dimensional cube. Using this, it it is proven in section \ref{se.5} that every triangulation of $\{0,1\}^4$ can made regular by performing a sequence of flips, and connectedness follows for the flip-graph of $\{0,1\}^4$.

\section{Preliminary definitions}
\label{se.2}

In this paper, a {\it point configuration} is a finite subset of $\mathbb{R}^4$. The elements of a point configuration are referred to as its {\it vertices}. Consider a point configuration $\mathcal{A}$. A simplicial complex on $\mathcal{A}$ is a set of affinely independent subsets of $\mathcal{A}$ whose convex hulls collectively form a polyhedral complex. The elements of a simplicial complex $C$ are called its {\it faces} and their vertices are also referred to as the {\it vertices of $C$}. The faces of $C$ that are maximal for the inclusion are called its {\it maximal faces}. The {\it domain }Êof a simplicial complex $C$ on $\mathcal{A}$, denoted by $\dom(C)$, is the union of the convex hulls of its faces:
$$
\dom(C)=\bigcup_{t\in{C}}\conv(t)
$$

A triangulation of $\mathcal{A}$ is a simplicial complex on $\mathcal{A}$ whose domain is precisely $\conv(\mathcal{A})$. Consider a triangulation $T$ of $\mathcal{A}$. For any face $t$ of $T$, the {\it link} of $t$ in triangulation $T$ is the following set:
$$
\link_T(t)=\{s\in{T}:s\cup{t}\in{T},\,s\cap{t}=\emptyset\}\mbox{.}
$$

If $X$ and $Y$ are two finite subsets of $\mathbb{R}^4$, denote by $X\star{Y}$ the set obtained by taking the union of each element of $X$ with each element of $Y$:
$$
X\star{Y}=\{x\cup{y}:(x,y)\in{X\times{Y}}\}\mbox{.}
$$

Further denote by $\mathcal{P}(S)$ the power set of any set $S$. If $t$ is a face of a triangulation $T$, these notations can be used to define the {\it star} of $t$ in $T$, that is, the subset of the faces of $T$ whose union with $t$ still belongs to $T$:
$$
\mathrm{star}_T(t)=\mathcal{P}(t)\star\link_T(t)\mbox{.}
$$

A circuit $z\subset\mathcal{A}$ is an affinely dependent set whose every proper subset is affinely independent. According to Radon's partition theorem \cite{Rad21}, a circuit $z$ admits a unique partition into two subsets $z^-$ and $z^+$ whose convex hulls are non-disjoint. Because of the unicity of this partition, the convex hulls of $z^-$ and $z^+$ even have non-disjoint relative interiors. As a consequence, $z^-$ and $z^+$ cannot both belong to the same triangulation of $z$. In fact, $z$ admits exactly two triangulations $\tau^-$ and $\tau^+$ that can be defined as follows:
$$
\tau^-=\{t\subset{z}:z^+\not\subset{t}\}\mbox{ and }\tau^+=\{t\subset{z}:z^-\not\subset{t}\}\mbox{.}
$$

Note in particular that $z^-$ and $z^+$ are respectively a face of $\tau^-$ and a face of $\tau^+$. Moreover, the maximal faces of $\tau^-$ and of $\tau^+$ always respectively admit $z^-$ and $z^+$ as subsets. If $T$ is a triangulation of $\mathcal{A}$, circuit $z$ is said to be {\it flippable in $T$} if there exist two subsets $\tau$ and $\lambda$ of $T$ so that $\tau$ is a triangulation of $z$ and every maximal face of $\tau$ admits $\lambda$ as its link in $T$. By symmetry, one can assume without loss of generality that $\tau$ is equal to $\tau^-$. In this case, the following set is a triangulation of $\mathcal{A}$ distinct from $T$ (see \cite{DRS10,San00}):
$$
\mathfrak{F}(z,T)=[T\setminus(\lambda\star\tau^-)]\cup(\lambda\star\tau^+)\mbox{.}
$$

In other words, $\mathfrak{F}(z,T)$ is obtained replacing in triangulation $T$ the faces of $\lambda\star\tau^-$ by the faces of $\lambda\star\tau^+$. The operation of transforming $T$ into $\mathfrak{F}(z,T)$ is called a {\it flip}. Observe that, according to the definitions of $\tau^-$ and $\tau^+$, triangulations $T$ and $\mathfrak{F}(z,T)$ respectively contain $z^-$ and $z^+$. Moreover, all the faces that are removed from $T$ when $z$ is flipped in this triangulation admit $z^-$ as a subset. This property will be useful hereafter.

The interested reader is referred to \cite{DRS10} for further definitions and results on the subject of triangulations and flips.

The flip-graph of $\mathcal{A}$, denoted by $\gamma(\mathcal{A})$ in the following, is the graph whose vertices are the triangulations of $\mathcal{A}$ and whose edges correspond to flips. The main result in this paper is the connectedness of $\gamma(\{0,1\}^4)$. On the way to establishing this result, the first step is to prove that the corner-cut triangulations \cite{DRS10} of $\{0,1\}^4$ are regular. The notion of regularity can be defined using {\it height functions} that is, real-valued maps on point configurations. A triangulation $T$ of a point configuration $\mathcal{A}$ is regular if there exists a height function $w:\mathcal{A}\rightarrow\mathbb{R}$ so that for all $t\in{T}$, some affine map $\xi:\mathbb{R}^4\rightarrow\mathbb{R}$ is smaller than $w$ on $\mathcal{A}\setminus{t}$ and coincides with $w$ on $t$. It is well known that the graph $\rho(\mathcal{A})$ induced by regular triangulations in $\gamma(\mathcal{A})$ is connected. Indeed, $\rho(\mathcal{A})$ is isomorphic to the $1$-skeleton of the so-called secondary polytope \cite{Gel90,Gel94}. This connectedness property will be the last argument used in the proof.

\section{First properties of $\{0,1\}^4$ and of its triangulations}
\label{se.3}

Denote the vertices of $\{0,1\}^4$ by letters $a$ to $p$ according to their positions as columns of the following matrix. The rows of this matrix show the coordinates of these points along the vectors of the canonical basis $(u_1,u_2,u_3,u_4)$ of $\mathbb{R}^4$:
$$
\begin{array}{lr}
\begin{array}{r}
\mbox{ }\\
u_1\\
u_2\\
u_3\\
u_4\\
\end{array}
&
\begin{array}{c}
\begin{array}{CCCCCCCCCCCCCCCC}
$a$ & $b$ & $c$ & $d$ & $e$ & $f$ & $g$ & $h$ & $i$ & $j$ & $k$ & $l$ & $m$ & $n$ & $o$ & $p$
\end{array}\\
\left[
\begin{array}{CCCCCCCCCCCCCCCC}
0 & 1 & 0 & 1 & 0 & 1 & 0 & 1 & 0 & 1 & 0 & 1 & 0 & 1 & 0 & 1\\
0 & 0 & 1 & 1 & 0 & 0 & 1 & 1 & 0 & 0 & 1 & 1 & 0 & 0 & 1 & 1\\
0 & 0 & 0 & 0 & 1 & 1 & 1 & 1 & 0 & 0 & 0 & 0 & 1 & 1 & 1 & 1\\
0 & 0 & 0 & 0 & 0 & 0 & 0 & 0 & 1 & 1 & 1 & 1 & 1 & 1 & 1 & 1\\
\end{array}
\right]
\end{array}
\end{array}
$$

Observe that the squared distance between two points of $\{0,1\}^4$ is an integer from $0$ to $4$ and that the boundary complex of $[0,1]^4$ is made up of cubes whose dimensions also range from $0$ to $4$. It turns out that, if the squared distance of two points $x$ and $y$ of $\{0,1\}^4$ is an integer $q$, then some $q$-dimensional face of $[0,1]^4$ admits $\{x,y\}$ as one of its diagonals. Hence, if $T$ is a triangulation of $\{0,1\}^4$, then every edge of its $1$-skeleton $\sigma(T)$ is a diagonal of some cube in the face complex of $[0,1]^4$. For any integer $q$, denote by $\sigma_q(T)$ the graph obtained by removing from $\sigma(T)$ all the edges whose squared length is not equal to $q$.

The $f$-vector of $[0,1]^4$ is $(1,16,32,24,8,1)$. Since distinct diagonals of a cube are never found in a same triangulation of this cube, then:

\begin{proposition}\label{pr.2}
It $T$ is a triangulation of $\{0,1\}^4$, $\sigma_2(T)$ admits at most $24$ edges, $\sigma_3(T)$ admits at most $8$ edges, and $\sigma_4(T)$ admits at most $1$ edge.
\end{proposition}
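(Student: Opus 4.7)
The plan is to turn the three bounds into a single counting argument: for each edge $\{x,y\}$ of $\sigma_q(T)$ exhibit a canonical $q$-face of $[0,1]^4$ to which $\{x,y\}$ must belong as a diagonal, and then observe that at most one such edge can be associated with any given $q$-face. The bounds will then follow immediately from the components $f_2=24$, $f_3=8$, $f_4=1$ of the $f$-vector.

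More concretely, I would first fix $q\in\{2,3,4\}$ and an edge $\{x,y\}$ of $\sigma_q(T)$. Since $x$ and $y$ are $0/1$-vectors at squared distance $q$, they differ in exactly $q$ coordinates and agree in the remaining $4-q$. Freezing the coordinates on which they agree, and letting the others vary over $\{0,1\}$, gives a unique $q$-dimensional face $F_{xy}$ of $[0,1]^4$ containing both $x$ and $y$; on this $q$-cube, $\{x,y\}$ is a (main) diagonal. This produces a well-defined map $\{x,y\}\mapsto F_{xy}$ from the edge set of $\sigma_q(T)$ to the set of $q$-faces of $[0,1]^4$.

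The next step is to show that this map has at most one preimage on each $q$-face. This is precisely the statement quoted in the paragraph preceding the proposition: any two main diagonals of a $q$-cube cross in the relative interior of that cube, so they cannot be simultaneously present as edges of a simplicial complex contained in a triangulation of $\{0,1\}^4$ (their convex hulls would overlap outside their intersection, contradicting the polyhedral-complex condition in the definition of a simplicial complex on $\mathcal{A}$). Hence the map above is injective in the sense that each $q$-face of $[0,1]^4$ receives at most one edge of $\sigma_q(T)$.

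Combining these two steps, the number of edges of $\sigma_q(T)$ is bounded by the number $f_q$ of $q$-faces of $[0,1]^4$, which gives $24$, $8$, and $1$ for $q=2,3,4$, respectively. There is no serious obstacle here: everything hinges on the geometric observation that any edge of $\sigma(T)$ with integer squared length $q$ is a diagonal of a unique $q$-face, which the excerpt has already established, plus the incompatibility of distinct diagonals of a cube inside a single triangulation.
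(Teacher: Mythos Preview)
Your argument is correct and is essentially the same as the paper's: the paper states in the paragraph preceding the proposition that every edge of $\sigma(T)$ with squared length $q$ is a diagonal of a $q$-face of $[0,1]^4$, notes that distinct diagonals of a cube never coexist in a triangulation, and then reads off the bounds from the $f$-vector $(1,16,32,24,8,1)$. Your write-up simply makes this counting explicit by exhibiting the map $\{x,y\}\mapsto F_{xy}$ and arguing injectivity, which is exactly the content of the paper's one-line justification.
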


Now, consider a point $x\in\{0,1\}^4$. Further consider the points in $\{0,1\}^4$ whose squared distance to $x$ is $1$. There are exactly four such points that are precisely the vertices adjacent to $x$ in the $1$-skeleton of $[0,1]^4$. The set $\kappa(x)$ made up of these four points and of point $x$ is called the {\it corner simplex} of $\{0,1\}^4$ with apex $x$. Observe that if $y\in\{0,1\}^4$ is a point whose distance to $x$ is larger than $1$, then the convex hull of $\{x,y\}$ and a face of $\conv(\kappa(x)\setminus\{x\})$ have non-disjoint relative interiors. Therefore, if $T$ is a triangulation of $\{0,1\}^4$ that contains $\kappa(x)$, then $x$ is an isolated vertex in the three graphs $\sigma_2(T)$, $\sigma_3(T)$, and $\sigma_4(T)$. Inversely, if $T$ is a triangulation of $\{0,1\}^4$  so that $x$ is isolated in $\sigma_2(T)$, $\sigma_3(T)$, and $\sigma_4(T)$, then the only $4$-dimensional face of $T$ that possibly contains $x$ is necessarily $\kappa(x)$. Hence:

\begin{proposition}\label{pr.3}
Let $x$ be a point of $\{0,1\}^4$. A triangulation $T$ of $\{0,1\}^4$ contains $\kappa(x)$ if and only if $x$ is isolated in $\sigma_2(T)$, in $\sigma_3(T)$, and in $\sigma_4(T)$.
\end{proposition}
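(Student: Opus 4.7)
The plan is to treat the two implications separately; both follow almost immediately from the geometric observations collected in the paragraph just above the statement, so the work needed is little more than making those observations precise.

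For the forward direction, I would assume $\kappa(x)\in T$. Then by downward closure of the simplicial complex, $\kappa(x)\setminus\{x\}$ and all of its subfaces are already faces of $T$. Given any $y\in\{0,1\}^4$ whose squared distance to $x$ is at least $2$, the preliminary observation from the text provides a face of $\conv(\kappa(x)\setminus\{x\})$ whose relative interior meets that of $\conv(\{x,y\})$. Were $\{x,y\}$ also a face of $T$, this would contradict the requirement that the convex hulls of the faces of $T$ form a polyhedral complex. Hence no such edge exists in $T$, which is exactly the assertion that $x$ is isolated in each of $\sigma_2(T)$, $\sigma_3(T)$, and $\sigma_4(T)$.

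For the reverse direction, I would assume $x$ is isolated in these three graphs. Since $x$ is an extreme point of $\conv(\{0,1\}^4)=[0,1]^4$ and $T$ triangulates this polytope, $x$ must appear as a vertex of some $4$-dimensional face $\sigma$ of $T$. By the isolation hypothesis, every edge of $T$ incident to $x$ has squared length exactly $1$, so the four remaining vertices of $\sigma$ all lie at squared distance $1$ from $x$. The only points of $\{0,1\}^4$ at squared distance $1$ from $x$ are the four elements of $\kappa(x)\setminus\{x\}$, so necessarily $\sigma=\kappa(x)$ and $\kappa(x)\in T$.

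There is no substantial obstacle in this argument; the only step requiring a line of justification is the existence of a $4$-dimensional face of $T$ containing $x$ in the reverse direction, which is immediate from $x$ being an extreme point of $\dom(T)$.
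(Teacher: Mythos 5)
Your proof is correct and takes essentially the same route as the paper, which establishes both implications in the paragraph preceding the statement: the forward direction via the improper intersection of $\conv(\{x,y\})$ with a face of $\conv(\kappa(x)\setminus\{x\})$, and the reverse direction by observing that the only $4$-dimensional face of $T$ that can contain $x$ is $\kappa(x)$. Your added justification that $x$, being an extreme point of $[0,1]^4$, must lie in some $4$-dimensional face of $T$ is exactly the small step the paper leaves implicit.
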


Now denote by $E$ the set made up of all the vertices of $\{0,1\}^4$ whose sum of coordinates in the canonical basis of $\mathbb{R}^4$ is even. Further denote by $O$ the set obtained removing the elements of $E$ from $\{0,1\}^4$. The vertices of $E$ and $O$ can be found from the above matrix:
$$
E=\{a,d,f,g,j,k,m,p\}\mbox{ and }O=\{b,c,e,h,i,l,n,o\}\mbox{.}
$$

Observe that the squared distance of two vertices of $\{0,1\}^4$ is even if and only if they both belong to the same of these two sets. In other words, $\{E,O\}$ is the only partition of $\{0,1\}^4$ into two subsets whose pairs of vertices have even squared distance. Further note that $E$ and $O$ can be obtained from one another by an isometry and that their convex hulls are $4$-dimensional cross polytopes \cite{DRS10}. In fact, if $s\in\{E,O\}$, one obtains the interior of $\conv(s)$ by cutting off from $[0,1]^4$ the convex hulls of the corner simplices with apex in $\{0,1\}^4\setminus{s}$. Since cross polytopes are simplicial, it follows that one can build a triangulation of $\{0,1\}^4$ by taking the union of a triangulation of $s$ with the power sets of every such corner simplices. The resulting triangulations of $\{0,1\}^4$ are called {\it corner-cut}. It is well-known that $\{0,1\}^4$ admits precisely eight corner-cut triangulations, and that each of the eight diagonals of $[0,1]^4$ is found in exactly one of these triangulations \cite{DRS10}. It turns out that every such triangulation of $\{0,1\}^4$ is regular:

\begin{lemma}\label{lem.1}
All the corner-cut triangulations of $\{0,1\}^4$ are regular.
\end{lemma}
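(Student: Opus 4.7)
The plan is to reduce by symmetry to a single corner-cut triangulation and then exhibit an explicit height function inducing it.

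First I would invoke the hyperoctahedral symmetry group of $[0,1]^4$: it acts transitively on $\{0,1\}^4$ and, since every such symmetry commutes with the antipode map $v \mapsto (1,1,1,1) - v$, it acts transitively on the eight diagonals of $[0,1]^4$. As each corner-cut triangulation is characterised by containing a unique diagonal, the induced action has a single orbit on the eight corner-cut triangulations, so it suffices to prove regularity for one representative. Take $T$ to be the corner-cut triangulation built from $s = E$ and diameter $\{a,p\}$; its maximal faces are the eight corner simplices $\kappa(x)$ for $x \in O$, together with the eight simplices $\{a,p\} \cup F$ where $F$ ranges over the triangular faces of the equatorial $3$-cross polytope on $E \setminus \{a,p\}$.

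Next I would define a height function $w : \{0,1\}^4 \to \mathbb{R}$ designed to produce $T$: set $w(x) = M$ for $x \in O$ (with $M$ a large constant to be fixed), $w(a) = w(p) = -1$, and $w(x) = 0$ for $x \in E \setminus \{a,p\}$. The rationale has two parts. Raising the $O$-vertices to a common large height forces each of them to cone only over the unique facet of $\conv(E)$ visible from it, which is exactly $\kappa(x) \setminus \{x\}$; this produces the eight corner simplices. Depressing $a$ and $p$ breaks the symmetry of the cross polytope $\conv(E)$ so that the induced triangulation of $\conv(E)$ is the one through the diameter $\{a,p\}$, yielding the remaining eight simplices. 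To certify that $w$ induces $T$, one checks, for each maximal simplex $t \in T$, that the affine function determined by $w|_t$ lies strictly below $w$ at the eleven vertices of $\{0,1\}^4 \setminus t$. The stabiliser of $T$ acts transitively on the corner simplices and transitively on the cross polytope simplices, so only two representative checks are required.

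The main obstacle is calibrating $M$ so that all strict inequalities hold uniformly. For the corner simplex $\kappa(b) = \{b,a,d,f,j\}$, solving the $5 \times 5$ linear system gives $\xi(x) = (M+1)x_1 - M(x_2+x_3+x_4) - 1$, with the binding constraint at the antipode $p$ requiring $M > 1/2$. For the cross polytope simplex $\{a,p,d,f,g\}$, the analogous system yields $\xi(x) = \tfrac{1}{2}(x_1+x_2+x_3) - \tfrac{3}{2}x_4 - 1$, and the binding constraint at $h$ again requires $M > 1/2$. Any $M > 1$ therefore suffices, and the verification reduces to finite elementary bookkeeping; no conceptual difficulty is expected beyond this.
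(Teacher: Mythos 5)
Your proposal is correct and follows essentially the same route as the paper: reduce by symmetry to the corner-cut triangulation through the diagonal $\{a,p\}$, take a height function that is constant on $O$, on $\{a,p\}$, and on $E\setminus\{a,p\}$, and certify the two orbit representatives of maximal simplices with explicit affine functionals (your $\xi$'s are the paper's $\xi_1,\xi_2$ shifted by a constant, with the paper's heights corresponding to $M=1$, which indeed satisfies your binding constraint $M>1/2$). The only cosmetic difference is that you cite the known structure of the eight corner-cut triangulations where the paper re-derives, via its Proposition 2, that a single diagonal lies in every maximal face of the induced cross-polytope triangulation.
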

\begin{proof}
Consider a corner-cut triangulation $T$ of $\{0,1\}^4$. By symmetry, one can assume without loss of generality that $T$ contains all the corner simplices of $\{0,1\}^4$ whose apex belongs to $O$. Consider the set $T'$ made up of the subsets of $E$ that are faces of $T$:
$$
T'=\{t\in{T}:t\subset{E}\}\mbox{.}
$$

As mentioned above, $T'$ is a triangulation of $E$ (that is, of the vertex set of a $4$-dimensional cross polytope). 
The following diagonals of the $4$-dimensional cube partition $E$ into four subsets:
$$
\{a,p\}\mbox{, }\{d,m\}\mbox{, }\{f,k\}\mbox{, and }\{g,j\}\mbox{.}
$$

It follows from proposition \ref{pr.2} that $T'$ contains at most one of these diagonals. Moreover, since each maximal face of $T'$ has $5$ vertices, any such face necessarily admits one of these diagonals as a subset. This proves that one of the four diagonals enumerated above is found as a subset of every maximal face of $T'$. By symmetry, one can assume that this diagonal is $\{a,p\}$. Now consider the height function $w:\{0,1\}^4\rightarrow\mathbb{R}$ that maps points $a$ and $p$ to $0$, the rest of $E$ to $1$, and every vertex of $O$ to $2$.

Consider a maximal face $t$ of $T$. As already mentioned, either $t$ is a corner simplex of $\{0,1\}^4$ whose apex is in $O$ or $t$ is a maximal face of $T'$. First assume that $t$ is a maximal face of $T'$. In this case, $t$ contains three vertices of $E$, in addition to $a$ and $p$. Since $t$ is affinely independent, each of the three diagonals $\{d,m\}$, $\{f,k\}$, and $\{g,j\}$ therefore admits exactly one of its vertices in $t$. By symmetry, one can assume that these three vertices are $d$, $f$, and $g$, that is $t=\{a,d,f,g,p\}$. Consider the vector:
$$
v_1=\frac{1}{2}(u_1+u_2+u_3)-\frac{3}{2}u_4\mbox{,}
$$
and the affine map $\xi_1$ that projects each vector $x\in\mathbb{R}^4$ to $x\cdot{v_1}$. Using the coordinates given in the matrix above, one can check that $\xi_1$ coincides with $w$ on $t$, and that $\xi_1$ is smaller than $w$ on the rest of $\{0,1\}^4$. Now assume that $t$ is a corner simplex of $\{0,1\}^4$ whose apex is in $O$. Here again, it can be assumed, using the symmetries of $\{0,1\}^4$ that $t$ is the corner simplex of $\{0,1\}^4$ with apex $b$, that is $t=\{a,b,d,f,j\}$. Consider the vector:
$$
v_2=2u_1-(u_2+u_3+u_4)\mbox{,}
$$
and the affine map $\xi_2$ that projects each vector $x\in\mathbb{R}^4$ to $x\cdot{v_2}$. Again, using the coordinates given in the matrix above, one can check that $\xi_2$ coincides with $w$ on $t$, and that $\xi_2$ is smaller than $w$ on the rest of $\{0,1\}^4$. As two simplicial complexes with the same maximal faces are necessarily identical, this shows that triangulation $T$ is regular.
\qed
\end{proof}

It is proven in section \ref{se.5} that any triangulation of $\{0,1\}^4$ is connected in $\gamma(\{0,1\}^4)$ to a corner-cut triangulation. According to lemma \ref{lem.1}, the connectedness of $\gamma(\{0,1\}^4)$ will then naturally follow from the connectedness of $\rho(\{0,1\}^4)$. The proof that a triangulation $T$ of $\{0,1\}^4$ can be transformed by flips into a corner-cut triangulation will require a careful study of the way $T$ decomposes the corners of $[0,1]^4$. In particular, bounds on the degrees of the vertices of graphs $\sigma_q(T)$  will be needed. These bounds are provided by the two following lemmas.

\begin{lemma}\label{lem.2}
For any triangulation $T$ of $\{0,1\}^4$, point configurations $E$ and $O$ each admit at least four vertices whose degree in $\sigma_3(T)$ is at most $1$.
\end{lemma}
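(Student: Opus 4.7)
The plan is to exploit a bipartite structure of $\sigma_3(T)$ together with the edge bound from Proposition \ref{pr.2}, and then close the argument by a straightforward degree count on each side of the bipartition.

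First I would observe that two points of $\{0,1\}^4$ at squared distance $3$ differ in exactly three coordinates, so the parities of their coordinate sums are opposite. Since $E$ consists of the vertices whose coordinate sum is even and $O$ of those whose coordinate sum is odd, every edge of $\sigma_3(T)$ has one endpoint in $E$ and one endpoint in $O$. In other words, $\sigma_3(T)$ is a bipartite graph with parts $E$ and $O$, and consequently the sum of the degrees of the vertices in $E$ equals the number of edges of $\sigma_3(T)$, which also equals the sum of the degrees of the vertices in $O$.

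I would then apply Proposition \ref{pr.2}, which bounds the number of edges of $\sigma_3(T)$ by $8$. Hence the sum of the degrees of the vertices in $E$ is at most $8$, and likewise for $O$. To conclude, I would argue by contradiction on $E$ (the argument for $O$ is identical). Suppose that fewer than four vertices of $E$ have degree at most $1$ in $\sigma_3(T)$. Since $E$ has exactly eight vertices, at least five of them then have degree at least $2$, so the sum of the degrees of the vertices in $E$ is at least $10$, which contradicts the bound just obtained.

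The proof has no real obstacle beyond noticing that distance-$3$ pairs always connect vertices of opposite parity; once this is written down, the statement reduces to a one-line pigeonhole count from Proposition \ref{pr.2}. The only point to be careful about is keeping the roles of $E$ and $O$ symmetric so that the argument transfers verbatim.
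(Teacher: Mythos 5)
Your proof is correct and uses the same ingredients as the paper's: the eight-edge bound from Proposition \ref{pr.2}, the parity observation that squared distance $3$ forces opposite coordinate-sum parities, and a pigeonhole count on degrees. The only (cosmetic) difference is that you establish the bipartiteness of $\sigma_3(T)$ up front and bound the degree sum directly, whereas the paper runs the same facts inside a proof by contradiction; the substance is identical.
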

\begin{proof}
Consider an element $s$ of $\{E,O\}$. Assume that more than four points of $s$ have degree at least $2$ in $\sigma_3(T)$. In this case, the sum over $s$ of the degrees in $\sigma_3(T)$ is greater than $8$. According to proposition \ref{pr.2} though, $\sigma_3(T)$ contains at most eight edges. Hence, at least one edge of $\sigma_3(T)$ has its two vertices in $s$. Since the squared distance between two points of $s$ is even and since all the edges in $\sigma_3(T)$ have squared length $3$, one obtains a contradiction. This proves that at most four vertices of $s$ have degree at least $2$ in $\sigma_3(T)$. Since $s$ has cardinality $8$, the desired result follows.\qed
\end{proof}

Consider an element $s$ of $\{E,O\}$. The bound given in lemma \ref{lem.2} on the degrees in graph $\sigma_3(T)$ does not hold for every point of $s$. Under a condition on the way $T$ triangulates the corners of $\{0,1\}^4$, a bound on the degrees in graph $\sigma_2(T)$ can be obtained that holds for every point of $s$.

\begin{lemma}\label{lem.3}
Let $T$ be a triangulation of $\{0,1\}^4$ and $s$ an element of $\{E,O\}$. If at least four corner simplices of $\{0,1\}^4$ with apex in $s$ are contained in $T$, then all the vertices of $s$ have degree at most $3$ in $\sigma_2(T)$.
\end{lemma}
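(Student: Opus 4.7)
The plan is to combine Proposition \ref{pr.3} with a parity observation. The key fact is that every edge of $\sigma_2(T)$ has squared length $2$, and two points of $\{0,1\}^4$ are at squared distance $2$ only if they share the same parity of coordinate sum, i.e., if they both belong to $E$ or both to $O$. Consequently, every edge of $\sigma_2(T)$ incident to a vertex $x \in s$ must go to another vertex of $s$. Moreover, the $6$ points of $\{0,1\}^4$ at squared distance exactly $2$ from $x$ are precisely the elements of $s \setminus \{x, \bar{x}\}$, where $\bar{x}$ denotes the unique vertex of $s$ at squared distance $4$ from $x$ (the antipode of $x$ in the cube). So the potential $\sigma_2(T)$-neighbors of $x$ lie entirely in $s \setminus \{x, \bar{x}\}$.

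Let $s^\star \subseteq s$ be the set of apices $y \in s$ for which $\kappa(y) \subset T$, so by hypothesis $|s^\star| \geq 4$. By Proposition \ref{pr.3}, each $y \in s^\star$ is isolated in $\sigma_2(T)$, hence no edge of $\sigma_2(T)$ can join $x$ to any vertex of $s^\star$. Thus the neighbors of $x$ in $\sigma_2(T)$ are contained in $s \setminus (\{x,\bar{x}\} \cup s^\star)$.

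I would then finish by a short case distinction on $x$. If $x \in s^\star$, then $x$ is isolated in $\sigma_2(T)$ and its degree is $0$. If $x \notin s^\star$, I bound the cardinality of $s \setminus (\{x,\bar{x}\} \cup s^\star)$: when $\bar{x} \in s^\star$, this set has at most $8 - (|s^\star|+1) \leq 3$ elements; when $\bar{x} \notin s^\star$, it has at most $8 - (|s^\star|+2) \leq 2$ elements. Either way, $x$ has degree at most $3$ in $\sigma_2(T)$, which is the desired conclusion.

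There is no real obstacle here: the argument is a pigeonhole count once one recognizes that the parity restriction confines the length-$2$ neighbors of any $x \in s$ to the six points of $s \setminus \{x, \bar{x}\}$, four of which are killed off as isolated vertices by the corner-simplex hypothesis combined with Proposition \ref{pr.3}. The only subtlety worth checking carefully is that the antipode $\bar{x}$ indeed belongs to the same parity class as $x$, which is immediate since the coordinate sum changes by an even amount ($4$) between antipodal vertices of $[0,1]^4$.
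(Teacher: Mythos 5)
Your proof is correct and follows essentially the same route as the paper's: apply Proposition \ref{pr.3} to conclude that at least four vertices of $s$ are isolated in $\sigma_2(T)$, use the parity of squared distances to confine the $\sigma_2(T)$-neighbors of any $x\in s$ to $s$ itself, and finish with a pigeonhole count. Your additional exclusion of the antipode $\bar{x}$ is a harmless refinement that the paper's coarser count (at most four non-isolated vertices of $s$, minus $x$ itself, gives degree at most $3$) does not need.
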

\begin{proof}
Assume that at least four corner simplices of $\{0,1\}^4$ with apex in $s$  are found in triangulation $T$. According to proposition \ref{pr.3}, these four apices are isolated in $\sigma_2(T)$. As $s$ contains exactly eight points, then at most four vertices of $s$ are not isolated in $\sigma_2(T)$. Now recall that the squared distance between a vertex of $s$ and a vertex of $\{0,1\}^4\setminus{s}$ is odd. As a consequence, two adjacent points in $\sigma_2(T)$ are necessarily both in $s$ or both in $\{0,1\}^4\setminus{s}$. Since at most four vertices of $s$ are not isolated $\sigma_2(T)$, every vertex of $s$ is adjacent to at most three points in this graph, which completes the proof. \qed
\end{proof}

\section{Flips in the corners of the $4$-dimensional cube}
\label{se.4}

Consider a point $x$ in $\{0,1\}^4$ and observe that $\kappa(x)\setminus\{x\}$ is a (regular) tetrahedron. The affine hull of this tetrahedron is an affine hyperplane $H$ of $\mathbb{R}^4$ that admits $p-2x$ as a normal vector. In addition, $H$ separates $x$ from the vertices of $\{0,1\}^4$ that do not belong to $\kappa(x)$. As a consequence, every vertex $y$ of $\{0,1\}^4$ other than $x$ can be centrally projected on $H$ with respect to point $x$. This projection will be denoted by $y/x$:
$$
y/x=x+\frac{4}{(y-x)\cdot(p-2x)}(y-x)\mbox{.}
$$

Now let $s$ be a subset of $\{0,1\}^4$. The following set is called the {\it homogeneous contraction} of $s$ at point $x$:
$$
s/x=\{y/x:y\in{s\setminus\{x\}}\}\mbox{.}
$$

This terminology, introduced in \cite{DRS10}, is due to the similarity of this notion with the contraction of oriented matroids. The notation given in \cite{DRS10} is made up of two fraction bars instead of just one, in order to distinguish these two types of contractions. This precaution is waived here because only homogeneous contractions are used. Note that homogeneous contractions are not generally required to lie in a fixed affine space \cite{DRS10}. Here, $s/x$ is explicitly placed in the affine hull of $\kappa(x)\setminus\{x\}$. This will be convenient in the following, especially for geometric interpretations. Observe that $\{0,1\}^4$ does not contain any three collinear points. As a consequence, the homogeneous contraction at point $x$ induces a bijection from $\{0,1\}^4\setminus\{x\}$ onto $\{0,1\}^4/x$. 

Now consider two elements $x$ and $y$ of $\{0,1\}^4$. By symmetry, point configurations $\{0,1\}^4/x$ and $\{0,1\}^4/y$ are isometric.
 \begin{figure}
\begin{centering}
\includegraphics{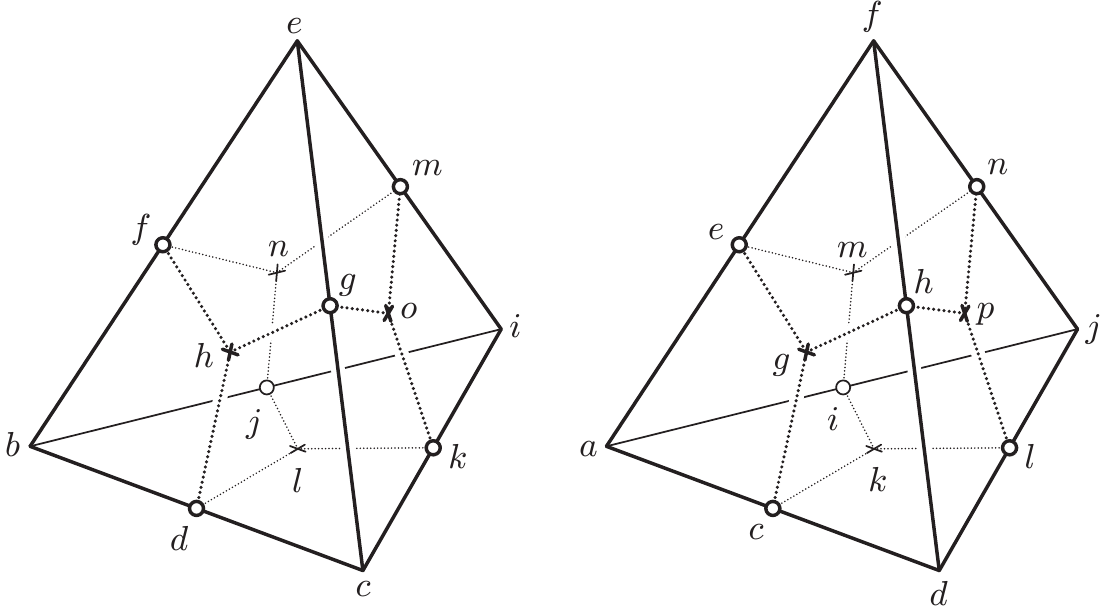}
\caption{Point configurations $\{0,1\}^4/a$ (left) and $\{0,1\}^4/b$ (right) where the point at the center of each tetrahedron has been omitted. Every point is labeled as the vertex of $\{0,1\}^4$ it is projected from by the homogeneous contraction. The only purpose of the dotted lines is to make it clear which point lies at the center of each triangular face.}\label{fig.1}
\end{centering}
\end{figure}
For instance, the homogeneous contractions of $\{0,1\}^4$ at points $a$ and $b$ are depicted in figure \ref{fig.1}. Note that vertices $p/a$ and $o/b$, that should be found in this figure at the center of the two tetrahedra have been omitted for the sake of clarity. 

Homogeneous contractions at a point $x\in\{0,1\}^4$ can be generalized as follows to any set $S$ whose elements are subsets of $\{0,1\}^4$:
$$
S/x=\{s/x:s\in{S}\}\mbox{.}
$$

Now denote by $Z(x)$ the set of all the circuits $z\subset\{0,1\}^4$ that contain point $x$. The Radon partition of any circuit $z$ in $Z(x)$ will be denoted hereafter by $\{\varepsilon_x^-(z),\varepsilon_x^+(z)\}$ with the convention that $x$ belongs to $\varepsilon_x^-(z)$. According to the next lemma, the homogeneous contraction at point $x$ of any such circuit remains a circuit. This result follows from the straightforward property that when $x$ is contained in a subset $s$ of $\{0,1\}^4$, the affine hull of $s/x$ is the intersection of the affine hull of $s$ with that of $\kappa(x)\setminus\{x\}$.

\begin{lemma}\label{lem.4}
Let $x$ be a vertex of $\{0,1\}^4$. For every $z\in{Z(x)}$, $z/x$ is a circuit whose Radon partition is $\{\varepsilon_x^-(z),\varepsilon_x^+(z)\}/x$.
\end{lemma}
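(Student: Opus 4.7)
The strategy rests on the two ingredients prepared just before the lemma: the affine-hull identity $\aff(s/x)=\aff(s)\cap H$ for every subset $s\subseteq\{0,1\}^4$ containing $x$ (with $H=\aff(\kappa(x)\setminus\{x\})$), and the explicit projection formula $y/x=x+\frac{4}{(y-x)\cdot(p-2x)}(y-x)$.

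To show $z/x$ is a circuit, first apply the identity to $s=z$. Since $x\in z$ lies outside $H$, the intersection $\aff(z)\cap H$ has dimension one less than $\aff(z)$, so $\dim\aff(z/x)=\dim\aff(z)-1=|z|-3$ while $|z/x|=|z|-1$; this makes $z/x$ affinely dependent. To see that every proper subset $w\subsetneq z/x$ is affinely independent, use that the contraction is injective on $\{0,1\}^4\setminus\{x\}$ (no three vertices of the cube are collinear) to write $w=w'/x$ for a unique $w'\subsetneq z\setminus\{x\}$. Then $w'\cup\{x\}$ is a proper subset of the circuit $z$, hence affinely independent with $\dim\aff(w'\cup\{x\})=|w'|$; a second application of the identity gives $\dim\aff(w)=|w'|-1=|w|-1$, i.e.\ $w$ is affinely independent.

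To identify the Radon partition of $z/x$, write $z^\pm=\varepsilon_x^\pm(z)$ and pick $q\in\mathrm{relint}(\conv(z^-))\cap\mathrm{relint}(\conv(z^+))$. Because $x$ is an extreme point of $[0,1]^4$, it cannot be a non-trivial convex combination of other points of $\{0,1\}^4$; this both forces $|z^-|,|z^+|\geq 2$ and prevents $q=x$, so the projection $q/x$ is well defined. Writing $\alpha_y=(y-x)\cdot(p-2x)$ (equal to the Hamming distance between $x$ and $y$, hence strictly positive on $\{0,1\}^4\setminus\{x\}$), a direct manipulation of the projection formula shows that any convex expression $q-x=\sum_y\mu_y(y-x)$ transports to $q/x-x=\sum_y\tilde\mu_y(y/x-x)$ with $\tilde\mu_y$ proportional to $\mu_y\alpha_y$. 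On each of the two sides $z^-$ and $z^+$ the $\mu_y$'s are strictly positive (the coefficient $\mu_x$ simply drops out of $q-x$ on the $z^-$ side), hence so are the $\tilde\mu_y$'s, placing $q/x$ in $\mathrm{relint}(\conv(z^-/x))\cap\mathrm{relint}(\conv(z^+/x))$. Uniqueness of the Radon partition of the circuit $z/x$ then identifies it as $\{z^-/x,z^+/x\}$.

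The main obstacle is that central projection from $x$ is not affine, so convex combinations of the $y$'s do not project to convex combinations of the $y/x$'s with the same coefficients. The saving idea is the explicit reweighting $\mu_y\mapsto\tilde\mu_y\propto\mu_y\alpha_y$, whose strict positivity requires the $\alpha_y$ to be strictly positive on $\{0,1\}^4\setminus\{x\}$---itself a reflection of the geometric fact that $H$ separates $x$ from every other vertex of the cube.
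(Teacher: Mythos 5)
Your proposal is correct and follows essentially the same route as the paper: the circuit part via the dimension-drop identity $\aff(s/x)=\aff(s)\cap H$ together with injectivity of the contraction, and the Radon-partition part by showing that a witness point of $\mathrm{relint}(\conv(z^-))\cap\mathrm{relint}(\conv(z^+))$ projects to a common point of the two projected convex hulls. The only difference is cosmetic: where the paper argues geometrically with the line through $x$ and the witness point, you make the same fact explicit by reweighting the barycentric coordinates ($\mu_y\mapsto\mu_y\alpha_y/\alpha_q$), which additionally yields intersection of relative interiors rather than just of convex hulls.
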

\begin{proof}
Let $z\in{Z(x)}$ be a circuit. Consider a subset $s$ of $z$ that contains $x$ and recall that the homogeneous contraction at point $x$ induces an bijection from $\{0,1\}^4\setminus\{x\}$ onto $\{0,1\}^4/x$. As a consequence, $s/x$ has exactly one element less than $s$. Further recall that the affine hull of $\kappa(x)\setminus\{x\}$ is an hyperplane of $\mathbb{R}^4$ and denote by $H$ this hyperplane. Since $x$ belongs to $s$, the affine hull of $s/x$ is the intersection of $H$ with the affine hull of $s$. Hence the homogeneous contraction at point $x$ not only decreases by one the cardinality of $s$ but also the dimension of its affine hull. As a consequence, $s/x$ is affinely dependent if and only if $s$ is affinely dependent. Replacing $s$ by $z$ in this assertion, one obtains that $z/x$ is affinely dependent. Further note that every proper subset of $z/x$ can be obtained as the homogeneous contraction at point $x$ of a proper subset of $z$ that contains $x$. As $z$ is a circuit, all its proper subsets are affinely independent. According to the above assertion, this property carries over to the proper subsets of $z/x$, which proves that $z/x$ is a circuit.

Respectively denote $\varepsilon_x^-(z)$ and $\varepsilon_x^+(z)$ by $z^-$ and by $z^+$. Recall that the convex hulls of $z^-$ and $z^+$ have non-disjoint relative interiors. As in addition, $x$ belongs to $z^-$ and does not belong to $z^+$, there exists a $1$-dimensional affine subspace $N$ of $\mathbb{R}^4$ that contains $x$ and that intersects the relative interiors of $\conv(z^-\setminus\{x\})$ and of $\conv(z^+)$. Moreover, $z^-/x$ and $z^+/x$ are the central projections on $H$ with respect to point $x$ of respectively $z^-\setminus\{x\}$ and $z^+$. Hence, the convex hulls of $z^-/x$ and $z^+/x$ both contain the intersection of $N$ with $H$, proving that these convex hulls are non-disjoint. Finally, since the homogeneous contraction at point $x$ induces a bijection from $\{0,1\}^4\setminus\{x\}$ onto $\{0,1\}^4/x$, sets $z^-/x$ and $z^+/x$ partition $z/x$. Hence, by definition, the Radon partition of $z/x$ is $\{z^-,z^+\}/x$, which completes the proof. \qed
\end{proof}

Note that, while this lemma is stated here for $\{0,1\}^4$, a similar result holds in general for arbitrary point configurations of any dimension.

Let $x$ be a point of $\{0,1\}^4$ and $T$ a triangulation of $\{0,1\}^4$. Consider the set obtained as the homogeneous contraction of $\link_T(\{x\})$ at point $x$. This set can be alternatively obtained by intersecting the convex hulls of the faces of $\mathrm{star}_T(\{x\})$ with the affine hull $H$ of $\kappa(x)\setminus\{x\}$ and by subsequently taking the vertex sets of these intersections. As a consequence, the convex hulls of the faces of $\link_T(\{x\})/x$ collectively form a polyhedral complex. In addition, every convex combination of $\{0,1\}^4/x$ is found in the intersection of $H$ with the convex hull of some face of $\mathrm{star}_T(\{x\})$. Hence $\link_T(\{x\})/x$ is a triangulation of $\{0,1\}^4/x$. Note in particular that the homogeneous contraction at point $x$ not only induces a bijection from $\{0,1\}^4\setminus\{x\}$ onto $\{0,1\}^4/x$ but also an isomorphism from $\link_T(\{x\})$ onto triangulation $\link_T(\{x\})/x$. This triangulation and its properties are studied in remainder of the section. 

Consider a circuit $z$ in $Z(x)$. A consequence of lemma \ref{lem.4} is that, if $z$ is flippable in $T$, then $z/x$ is flippable in $\link_T(\{x\})/x$. While the converse implication is not true in general, the next theorem states that it becomes so if one further assumes that $\link_T(\{x\})/x$ contains $\varepsilon_x^-(z)/x$.
Indeed, in this case, $\varepsilon_x^-(z)$ must be a face of $T$. Hence, if $z$ can be flipped in $T$, all the faces removed by this flip necessarily admit $\varepsilon_x^-(z)$ as a subset. As $x$ is a vertex of $\varepsilon_x^-(z)$, every such face belongs to the star of $\{x\}$ in $T$ and can be reconstructed from $\link_T(\{x\})/x$. It can be proven, as a consequence, that the flippability of $z/x$ in triangulation $\link_T(\{x\})/x$ carries over to the flippability of $z$ in triangulation $T$. Note that the result fails if $\varepsilon_x^-(z)$ is replaced by $\varepsilon_x^+(z)$ precisely because $x$  does not belong to $\varepsilon_x^+(z)$. In this case, $z\setminus\{x\}$ would be affected if $z$ were flipped in $T$. However, this simplex cannot be reconstructed from $\link_T(\{x\})/x$ because it does not belong to the star of $\{x\}$ in $T$.

\begin{theorem}\label{thm.1}
Let $x$ be a vertex of $\{0,1\}^4$, $T$ a triangulation of $\{0,1\}^4$, and $z\in{Z(x)}$ a circuit. If $z/x$ is flippable in $\link_T(\{x\})/x$ and $\varepsilon_x^-(z)/x$ belongs to $\link_T(\{x\})/x$, then $z$ is flippable in $T$ and $\varepsilon_x^-(z)$ belongs to $T$.
\end{theorem}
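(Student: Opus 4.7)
The plan is to use the bijection between $\link_T(\{x\})$ and $\link_T(\{x\})/x$ induced by the homogeneous contraction at $x$ in order to lift the given flip in $\link_T(\{x\})/x$ to a flip in $T$. I would first observe that the hypothesis $\varepsilon_x^-(z)/x \in \link_T(\{x\})/x$ immediately yields $\varepsilon_x^-(z) \in T$: since $x \in \varepsilon_x^-(z)$, one has $\varepsilon_x^-(z)/x = (\varepsilon_x^-(z) \setminus \{x\})/x$, so by the bijection $s \mapsto s/x$ between $\link_T(\{x\})$ and $\link_T(\{x\})/x$, the set $\varepsilon_x^-(z) \setminus \{x\}$ belongs to $\link_T(\{x\})$, and therefore $\varepsilon_x^-(z) = (\varepsilon_x^-(z) \setminus \{x\}) \cup \{x\}$ is a face of $T$.

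Next I would identify which triangulation of $z/x$ realizes the flip in $\link_T(\{x\})/x$. By lemma \ref{lem.4}, the Radon partition of $z/x$ is $\{\varepsilon_x^-(z)/x, \varepsilon_x^+(z)/x\}$, so $z/x$ admits exactly two triangulations $\tilde\tau^-$ and $\tilde\tau^+$, respectively containing $\varepsilon_x^-(z)/x$ and $\varepsilon_x^+(z)/x$. A triangulation cannot contain both of these sets as faces, because their convex hulls have non-disjoint relative interiors; thus, since $\varepsilon_x^-(z)/x$ already lies in $\link_T(\{x\})/x$, the triangulation of $z/x$ contained in $\link_T(\{x\})/x$ must be $\tilde\tau^-$. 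Let $\tilde\lambda \subset \link_T(\{x\})/x$ denote the common link of its maximal faces.

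Finally I would lift the flip to $T$. Every maximal face of $\tilde\tau^-$ has the form $M/x$ where $M \subset z$ satisfies $|M| = |z| - 1$, $\varepsilon_x^-(z) \subset M$, and $x \in M$; these are precisely the maximal faces of the triangulation $\tau$ of $z$ that contains $\varepsilon_x^-(z)$. Since $M/x \in \link_T(\{x\})/x$, the bijection gives $M \setminus \{x\} \in \link_T(\{x\})$, hence $M \in T$, and closure under subsets yields $\tau \subset T$. A direct verification, using only that $x \in M$, establishes the identity $\link_T(M) = \link_{\link_T(\{x\})}(M \setminus \{x\})$, which by the isomorphism is the preimage of $\link_{\link_T(\{x\})/x}(M/x) = \tilde\lambda$. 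Thus all maximal faces of $\tau$ share a common link $\lambda$ in $T$, and $\lambda \subset \link_T(\{x\}) \subset T$, so $z$ is flippable in $T$. The main technical point is the identity on links just described: it rests on the fact that $x \in M$, which forces every face of $\link_T(M)$ to have its $x$-augmentation in $\mathrm{star}_T(\{x\})$, so that links in $T$ of such faces reduce entirely to links computed inside $\link_T(\{x\})$ and are then transported to $\link_T(\{x\})/x$ by the isomorphism.
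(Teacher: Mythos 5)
Your proposal is correct and follows essentially the same route as the paper: both identify the triangulation of $z/x$ contained in $\link_{T}(\{x\})/x$ as the one containing $\varepsilon_x^-(z)/x$, lift it through the bijection induced by the homogeneous contraction to a triangulation $\tau$ of $z$ inside $T$, and transport the common link of the maximal faces back via the isomorphism between $\link_T(\{x\})$ and $\link_T(\{x\})/x$. Your explicit identity $\link_T(M)=\link_{\link_T(\{x\})}(M\setminus\{x\})$ for $x\in M$ is a slightly more structured way of stating the link-transport step that the paper carries out directly, but the substance is identical.
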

\begin{proof}
First assume that $z/x$ is flippable in $\link_T(\{x\})/x$. It follows that some triangulation of $z/x$ is a subset of $\link_T(\{x\})/x$. Denote:
$$
\tau=\{t\subset{z}:t/x\in\link_T(\{x\})/x\}\mbox{.}
$$

By construction, $\tau/x$ is precisely the triangulation of $z/x$ found as a subset of $\link_T(\{x\})/x$. In fact $\tau$ is a subset of $T$. Indeed, consider a face $t$ of $\tau$. By definition, $t/x$ belongs to $\link_T(\{x\})/x$. Hence, some face $s$ of $\link_T(\{x\})$ satisfies $s/x=t/x$. Now recall that the homogeneous contraction at vertex $x$ induces a bijection from $\{0,1\}^4\setminus\{x\}$ onto $\{0,1\}^4/x$. As a consequence $s$ is necessarily a subset of $t$. In fact $t$ is either equal to $s$ or to $s\cup\{x\}$. In both cases, $t$ belongs to $T$ because $s\in\link_T(\{x\})$ and $s\cup\{x\}\in\mbox{star}_T(\{x\})$.

Further assume that $\link_T(\{x\})/x$ contains $\varepsilon_x^-(z)/x$. As $\varepsilon_x^-(z)/x$ is a subset of $z/x$, then it necessarily belongs to $\tau/x$. According to lemma \ref{lem.4}, the Radon partition of $z/x$ is made up of $\varepsilon_x^-(z)/x$ and of $\varepsilon_x^+(z)/x$. By definition of the triangulations of a circuit, one therefore obtains:
\begin{equation}\label{thm.1.eq.0}
\tau/x=\{s\subset{z/x}:\varepsilon_x^+(z)/x\not\subset{s}\}\mbox{.}
\end{equation}

Let $t$ be a subset of $z$ that does not admit $\varepsilon_x^+(z)$ as a subset. It can be proven, using (\ref{thm.1.eq.0}), that $t$ belongs to $\tau$. Indeed, since $t$ is a subset of $z$, then $t/x\subset{z/x}$. In addition, as $x$ does not belong to $\varepsilon_x^+(z)$, $\varepsilon_x^+(z)\setminus\{x\}$ is not a subset of $t\setminus\{x\}$. As the homogeneous contraction at vertex $x$ induces a bijection from $\{0,1\}^4\setminus\{x\}$ onto $\{0,1\}^4/x$, this proves that $\varepsilon_x^+(z)/x$ is not a subset of $t/x$ and, following (\ref{thm.1.eq.0}), that $t/x$ belongs to $\tau/x$. Since $\tau/x$ is a subset of $\link_T(\{x\})/x$, one obtains from the definition of $\tau$ that $t\in\tau$.

It has been proven in the last paragraph that $\tau$ contains all the subsets $t\subset{z}$ so that $\varepsilon_x^+(z)\not\subset{t}$. As $\tau\subset{T}$, then the convex hulls of its elements collectively form a polyhedral complex. According to the definition of the triangulations of a circuit, this proves that $\tau$ is the following triangulation of $z$:
$$
\tau=\{t\subset{z}:\varepsilon_x^+(z)\not\subset{t}\}\mbox{.}
$$

In particular, $T$ admits a triangulation of $z$ as a subset and contains $\varepsilon_x^-(z)$. Now consider two maximal faces $s$ and $t$ of $\tau$. It follows that $\varepsilon_x^-(z)$ is a subset of $s$ and $t$. Hence, these two faces contain $x$. In particular, their links in $T$ are subsets of $\link_T(\{x\})$. Since the homogeneous contraction at point $x$ induces an isomorphism from $\link_T(\{x\})$ onto $\link_T(\{x\})/x$, then the links of $s/x$ and $t/x$ in $\link_T(\{x\})/x$ are respectively $\link_T(s)/x$ and $\link_T(t)/x$. In addition, $s/x$ and $t/x$ are maximal faces of $\tau/x$. As $z/x$ is flippable in $\link_T(\{x\})/x$, it follows that $\link_T(s)/x=\link_T(t)/x$. Finally, as the homogeneous contraction at vertex $x$ induces a bijection from $\{0,1\}^4\setminus\{x\}$ onto $\{0,1\}^4/x$, then $\link_T(s)$ and $\link_T(t)$ are necessarily equal, which completes the proof. \qed
\end{proof}

As was the case for lemma \ref{lem.4}, this theorem can be generalized without much more effort to any point configuration of arbitrary dimension. The dimension of $\link_T(\{x\})/x$ and the number of its vertices have to remain small, though, for this result to provide benefit from a computational point of view.
   
Let $x$ be a vertex of $\{0,1\}^4$ and $T$ a triangulation of $\{0,1\}^4$. Call $V_x(T)$ the vertex set of $\link_T(\{x\})/x$ and denote by $D_x(T)$ the domain of $\mathrm{star}_T(\{x\})$: 
$$
D_x(T)=\dom(\mathrm{star}_T(\{x\}))\mbox{.}
$$

Consider a circuit $z\in{Z(x)}$ so that $z$ is flippable in $T$ and $\varepsilon_x^-(z)$ belongs to $T$. Note that these conditions correspond to the conclusions of theorem \ref{thm.1}. Under these conditions, the following theorem states that the domain of $\mathrm{star}_T(\{x\})$ necessarily becomes smaller when circuit $z$ is flipped in $T$ and that, in addition, no vertex is introduced in $\link_T(\{x\})$ by this flip.

\begin{theorem}\label{thm.2}
Let $x$ be a vertex of $\{0,1\}^4$ and $T$ a triangulation of $\{0,1\}^4$. If a circuit $z\in{Z(x)}$ is flippable in $T$ and if $\varepsilon_x^-(z)\in{T}$, then $D_x(\mathfrak{F}(T,z))$ is a proper subset of $D_x(T)$ and $V_x(\mathfrak{F}(T,z))$ is a subset of $V_x(T)$.
\end{theorem}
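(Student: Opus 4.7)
The plan is to exploit the explicit formula $\mathfrak{F}(T,z) = [T \setminus (\lambda \star \tau^-)] \cup (\lambda \star \tau^+)$ together with the fact that $x \in \varepsilon_x^-(z)$ while $x \notin \varepsilon_x^+(z)$. Abbreviate $z^\pm = \varepsilon_x^\pm(z)$ and let $\lambda$, $\tau^-$ come from the flip definition. Inside $T$ the subcomplex $\lambda \star \tau^-$ triangulates the flip region $R = \bigcup_{\ell \in \lambda} \conv(\ell \cup z)$, and inside $\mathfrak{F}(T,z)$ the subcomplex $\lambda \star \tau^+$ retriangulates the same $R$; outside $R$ the two triangulations coincide.

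For $V_x(\mathfrak{F}(T,z)) \subseteq V_x(T)$ I rule out genuinely new edges through $x$. Any effectively inserted face contains $z^+$, and it contains $x$ only by also containing some vertex of $z^-$ distinct from $x$, so it has at least $|z^+| + 1$ vertices. Because every element of $\{0,1\}^4$ is extreme in $\conv(\{0,1\}^4)$, no point of $\{0,1\}^4$ can lie in the convex hull of the others, forcing $|z^\pm| \geq 2$. Hence every inserted face through $x$ has at least three vertices, no new edge $\{x,y\}$ appears, and the inclusion follows. For $D_x(\mathfrak{F}(T,z)) \subseteq D_x(T)$, every maximal face of $\lambda \star \tau^-$ contains $z^- \ni x$, so $R \subseteq D_x(T)$; any face of $\mathfrak{F}(T,z)$ through $x$ is then either retained from $T$ (hence already in $\mathrm{star}_T(\{x\})$) or belongs to $\lambda \star \tau^+$ (hence has convex hull inside $R \subseteq D_x(T)$).

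For the strict inclusion I pick a maximal $\ell \in \lambda$ and a point $p$ in the topological interior of the $4$-simplex $\conv(\ell \cup (z \setminus \{x\}))$. This simplex is a maximal face of $\lambda \star \tau^+$ and therefore a maximal simplex of $\mathfrak{F}(T,z)$; crucially, its vertex set does not contain $x$. The main step is then to show that no other face of $\mathfrak{F}(T,z)$ has $p$ in its convex hull, and this is the only real obstacle. It is handled by the simplicial-complex axiom that distinct simplices have disjoint relative interiors, combined with the fact that outside $R$ the complex $\mathfrak{F}(T,z)$ agrees with $T$, so no retained face of $T$ can cover an interior point of the new top-dimensional simplex. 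Hence $\ell \cup (z \setminus \{x\})$ is the unique face of $\mathfrak{F}(T,z)$ whose convex hull contains $p$; since this face omits $x$, one gets $p \notin D_x(\mathfrak{F}(T,z))$, while $p \in R \subseteq D_x(T)$, yielding strict inclusion.
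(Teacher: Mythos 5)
Your proof is correct and follows essentially the same route as the paper: the inclusions come from the fact that every genuinely removed face contains $\varepsilon_x^-(z)\ni x$ and every genuinely inserted face contains $\varepsilon_x^+(z)$, which cannot be a singleton because the points of $\{0,1\}^4$ are extreme. The only cosmetic difference is in the strictness step, where the paper argues that $\conv(z\setminus\{x\})\subseteq D_x(\mathfrak{F}(T,z))$ would force the affinely dependent set $z$ to be a face, while you exhibit an interior point of the new maximal simplex $\ell\cup(z\setminus\{x\})$; both hinge on the same witness face $z\setminus\{x\}$ omitting $x$.
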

\begin{proof}
Assume that some circuit $z\in{Z(x)}$ is flippable in $T$ and that $\varepsilon_x^-(z)$ belongs to $T$. Therefore, every face removed from $T$ when flipping $z$ in $T$ admits $\varepsilon_x^-(z)$ as a subset. Hence, these faces necessarily contain $x$ and as a consequence, the union $U$ of their convex hulls is a subset of $D_x(T)$. As the convex hull of any simplex introduced in $T$ by the same flip is a subset of $U$, one therefore obtains that $D_x(\mathfrak{F}(T,z))\subset{D_x(T)}$.

Further denote $t=z\setminus\{x\}$. As $x$ does not belong to $t$, then $\varepsilon_x^-(z)$ is not a subset of $t$. By definition of the triangulations of a circuit, $t$ is a face of $\mathfrak{F}(T,z)$. In addition, if the convex hull of $t$ is a subset of $D_x(\mathfrak{F}(T,z))$, then $t$ necessarily belongs to the link of $\{x\}$ in $\mathfrak{F}(T,z)$. In this case, according to the definition of the link, $t\cup\{x\}$ must be a face of $\mathfrak{F}(T,z)$. As $t\cup\{x\}$ is affinely dependent, this is impossible. One thus obtains an indirect proof that $\conv(t)$ is not a subset of $D_x(\mathfrak{F}(T,z))$. Further observe that $\conv(t)\subset{U}$. As $U$ is, in turn, a subset of $D_x(T)$, this proves that $D_x(\mathfrak{F}(T,z))$ is a proper subset of $D_x(T)$.

Now observe that $V_x(\mathfrak{F}(T,z))$ is always a subset of $V_x(T)$ except if some point is introduced into the link of $x$ when $z$ is flipped in $T$. This occurs only if for some $y\in\{0,1\}^4\setminus\{x\}$, either $\varepsilon_x^+(z)=\{y\}$ or $\varepsilon_x^+(z)=\{x,y\}$. These situations are both impossible, though. Indeed, following its definition, $\varepsilon_x^+(z)$ does not contain $x$. In addition, if $\varepsilon_x^+(z)$ is a singleton then, the unique point it contains must lie in the interior of $[0,1]^4$, which cannot occur because every point in $\varepsilon_x^+(z)$ is a vertex of $[0,1]^4$. As a consequence, $V_x(\mathfrak{F}(T,z))$ is indeed a subset of $V_x(T)$ and the result follows. \qed
\end{proof}

Let $x$ be a point in $\{0,1\}^4$. Further consider a set $S\subset\{0,1\}^4/x$ that admits $\kappa(x)\setminus\{x\}$ as a subset and denote by $L_x(S)$ the set of all the triangulations $T$ of $S$ so that for every circuit $z\in{Z(x)}$, either $z/x$ is not flippable in $T$, or $\varepsilon_x^-(z)/x$ does not belong to $T$. The following result is obtained as a consequence of theorems \ref{thm.1} and \ref{thm.2}:

\begin{corollary}\label{cor.1}
Let $T$ be a triangulation of $\{0,1\}^4$. For any $x\in\{0,1\}^4$, there exists a path in $\gamma(\{0,1\}^4)$ from $T$ to a triangulation $T'$ so that $\link_{T'}(\{x\})/x$ belongs to $L_x(V_x(T))$, and for all $t\in{T}$, if $x\not\in{t}$ then $t\in{T'}$.
\end{corollary}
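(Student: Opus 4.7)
The plan is to iteratively transform $T$ by flips produced by Theorem \ref{thm.1}, each of which strictly simplifies the star of $\{x\}$ thanks to Theorem \ref{thm.2}, and to stop once no such flip is possible --- which is precisely the defining condition for a triangulation to lie in $L_x(V_x(T))$. Concretely, I would set $T_0=T$ and build a sequence $T_0,T_1,\dots$ of triangulations of $\{0,1\}^4$ linked by edges of $\gamma(\{0,1\}^4)$ as follows. Given $T_n$ with $V_x(T_n)\subseteq V_x(T)$, if $\link_{T_n}(\{x\})/x$ already lies in $L_x(V_x(T))$ then I stop and set $T'=T_n$. Otherwise the failure of this membership produces a circuit $z\in Z(x)$ such that $z/x$ is flippable in $\link_{T_n}(\{x\})/x$ and $\varepsilon_x^-(z)/x\in\link_{T_n}(\{x\})/x$; Theorem \ref{thm.1} then guarantees that $z$ is flippable in $T_n$ and that $\varepsilon_x^-(z)\in T_n$, allowing me to set $T_{n+1}=\mathfrak{F}(z,T_n)$.

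The key invariants for the induction are delivered by Theorem \ref{thm.2}, which yields $D_x(T_{n+1})\subsetneq D_x(T_n)$ together with $V_x(T_{n+1})\subseteq V_x(T_n)\subseteq V_x(T)$. The main obstacle is proving that this iteration terminates. I would argue as follows: throughout the process, $\mathrm{star}_{T_n}(\{x\})$ is a simplicial complex on the fixed finite set $\{x\}\cup V_x(T)$, and there are only finitely many such complexes. Moreover, the star of $\{x\}$ determines its domain $D_x(T_n)$, while Theorem \ref{thm.2} forces $D_x$ to strictly decrease at each step; hence no star of $\{x\}$ can be revisited, and the sequence must be finite. The final triangulation $T'$ then satisfies $\link_{T'}(\{x\})/x\in L_x(V_x(T))$ by the stopping condition.

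It remains to verify that every face $t\in T$ with $x\notin t$ survives to $T'$. This follows directly from the observation recalled in Section \ref{se.2}: when a circuit $z$ is flipped in $T_n$, every face removed from $T_n$ admits $\varepsilon_x^-(z)$ as a subset. Since $x\in\varepsilon_x^-(z)$ at every step of the iteration, no face avoiding $x$ is ever deleted, so each such face persists through the entire sequence and, in particular, belongs to $T'$.
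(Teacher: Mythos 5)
Your proof is correct and takes essentially the same route as the paper: the paper also iterates the flips supplied by Theorems~\ref{thm.1} and~\ref{thm.2}, merely packaging the termination argument by selecting, among the finitely many triangulations reachable from $T$ via such flips, one whose $D_x$ is minimal for inclusion and deriving a contradiction --- which is equivalent to your strictly-decreasing-chain argument. The preservation of the faces avoiding $x$ is argued identically via the observation that every removed face contains $\varepsilon_x^-(z)\ni x$.
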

\begin{proof}
Let $x$ be a vertex of $\{0,1\}^4$. Consider the directed graph $\Gamma$ whose vertices are the triangulations of $\{0,1\}^4$ and whose arcs connect a triangulation $U$ to a triangulation $V$ whenever there exists a circuit $z\in{Z(x)}$ flippable in $U$ so that $\varepsilon_x^-(z)\in{U}$ and $V=\mathfrak{F}(U,z)$. As $\{0,1\}^4$ admits a finite number of triangulations, one can find a triangulation $T'$ of $\{0,1\}^4$ so that there exists a directed path in $\Gamma$ from $T$ to $T'$ and $D_x(T')$ is minimal for the inclusion.

If $\link_{T'}(\{x\})/x$ does not belong to $L_x(V_x(T))$ then, by definition of this set, there exists a circuit $z\in{Z(x)}$ so that $z/x$ is flippable in $\link_{T'}(\{x\})/x$ and $\varepsilon_x^-(z)/x$ is a face of $\link_{T'}(\{x\})/x$. It then follows from theorem \ref{thm.1} that circuit $z$ is flippable in $T'$ and that $\varepsilon_x^-(z)$ belongs to $T'$. Moreover, according to theorem \ref{thm.2}, $D_x(\mathfrak{F}(T',z))$ is a proper subset of $D_x(T')$, which produces a contradiction. Hence, triangulation $\link_{T'}(\{x\})/x$ belongs to $L_x(V_x(T))$.

Now, according to the way $\Gamma$ has been oriented, whenever a flip is performed in a triangulation $U$ on the path that connects $T$ to $T'$, this flip only removes faces of $U$ that contain $x$. As a consequence, all the faces of $T$ that do not contain $x$ are still found in $T'$. \qed
\end{proof}

Let $s$ be an element of $\{E,O\}$, $x$ a vertex of $s$, and $T$ a triangulation of $\{0,1\}^4$. Observe that $\kappa(x)$ is a face of $T$ if and only if $\link_{T}(\{x\})/x$ is the unique triangulation of $\kappa(x)\setminus\{x \}$ (whose unique $3$-dimensional simplex is $\kappa(x)\setminus\{x \}$ itself). It then follows from corollary \ref{cor.1} that if $\kappa(x)\not\in{T}$ and $L_x(V_x(T))$ is a singleton whose element is the unique triangulation of $\kappa(x)\setminus\{x\}$, then one can use a sequence of flips to increase by one the number of corner simplices of $\{0,1\}^4$ with apex in $s$ that are contained in $T$. The success of this method is conditioned by the content of $L_x(V_x(T))$, though. The first part of the next section is devoted to the enumeration of $L_x(S)$ for well chosen subsets $S$ of $\{0,1\}^4/x$. It will then be shown that corollary \ref{cor.1} indeed provides a way to flip any triangulation of $\{0,1\}^4$ to a corner-cut triangulation.  

\section{The flip-graph of $\{0,1\}^4$ is connected}
\label{se.5}

The results of the two previous sections will now be used in order to establish the connectedness of $\gamma(\{0,1\}^4)$. As discussed at the end of last section, corollary \ref{cor.1} gives conditions under which a sequence of flips can be used to increase by one the number of corner simplices found in a triangulation of $\{0,1\}^4$. This section is devoted to showing that these conditions always hold for triangulations of $\{0,1\}^4$ that are not corner-cut.

\begin{algorithm}
\caption{Enumeration of $L_x(S)$.}\label{alg.1}
\begin{algorithmic}
\STATE $L_x(S)\gets\emptyset$
\FOR{every triangulation $T$ of $S$}
\STATE $f_1\gets0$
\FOR{every circuit $z\in{Z(x)/x}$}
\STATE $\lambda\gets\emptyset$
\STATE $f_2\gets1$
\FOR{every point $y$ in $\varepsilon_x^+(z)$}
\IF{$z\setminus\{y\}\in{T}$}
\IF{$\lambda==\emptyset$}
\STATE $\lambda\gets\link_T(z\setminus\{y\})$
\ELSE
\IF{$\lambda\neq\link_T(z\setminus\{y\})$}
\STATE $f_2\gets0$
\ENDIF
\ENDIF
\ELSE
\STATE $f_2\gets0$
\ENDIF
\ENDFOR
\IF{$f_2==1$}
\STATE $f_1\gets1$
\ENDIF
\ENDFOR
\IF{$f_1==0$}
\STATE $L_x(S)\gets{L_x(S)\cup\{T\}}$
\ENDIF
\ENDFOR
\end{algorithmic}
\end{algorithm}
Consider a vertex $x$ of $\{0,1\}^4$ and a point configuration $S\subset\{0,1\}^4/x$ that admits $\kappa(x)\setminus\{x\}$ as a subset. If $S$ is small enough, TOPCOM \cite{Ram02} can be used to obtain a complete list of the triangulations of $S$, as well as the list of the circuits $z\subset{S}$. Once these are known, it is possible to enumerate $L_x(S)$ using algorithm \ref{alg.1}. This algorithm takes advantage of the following property. Consider a circuit $z$ and denote its Radon partition by $\{z^-,z^+\}$. Further denote by $\tau$ the triangulation of $z$ that contains $z^-$. In this case, $\tau$ is a subset of a triangulation $T$ if and only if for all $y\in{z^+}$, $T$ contains $z\setminus\{y\}$. In fact, the sets $z\setminus\{y\}$ are precisely the maximal elements of $\tau$, which further helps to check whether $z$ is flippable in $T$. Observe that algorithm \ref{alg.1} is linear in the number of triangulations of $S$ and that it does not require any arithmetic calculations. This algorithm therefore displaces the difficulty to completely enumerating the triangulations of $S$. If $S=\{0,1\}^4/x$, such an enumeration turns out to be almost as difficult as the enumeration of all the triangulations of $\{0,1\}^4$. Fortunately, the study can be narrowed down to smaller subsets of $\{0,1\}^4/x$. For instance, consider the three following subsets of $\{0,1\}^4/a$:
$$
\left\{
\begin{array}{l}
S_1=[\{0,1\}^4\setminus\{h,n,o,p\}]/a\mbox{,}\\
S_2=[\{0,1\}^4\setminus\{d,j,k,p\}]/a\mbox{,}\\
S_3=[\{0,1\}^4\setminus\{f,g,j,k,p\}]/a\mbox{.}\\
\end{array}
\right.
$$

Observe that $S_1$ is obtained by removing from $\{0,1\}^4/a$ the centroid $p/a$ of tetrahedron $\{b,c,e,i\}/a$ and the centroids of three triangular faces of this tetrahedron. The other two point configurations $S_2$ and $S_3$ are obtained by removing $p/a$ from $\{0,1\}^4/a$ as well as the centroids of respectively three and four edges of $\{b,c,e,i\}/a$. According to the next lemma, if a triangulation $T$ of $\{0,1\}^4$ is not corner-cut, then it is always possible to find a vertex $x$ of $\{0,1\}^4$ and an integer $q\in\{1,2,3\}$ so that $\kappa(x)$ does not belong to $T$ and $V_x(T)$ is isometric to a subset of $S_q$. This result is found as a consequence of lemmas \ref{lem.2} and \ref{lem.3} proven in section \ref{se.3}.

\begin{lemma}\label{lem.5}
Let $T$ be a triangulation of $\{0,1\}^4$ and $s\in\{E,O\}$. If $T$ is not corner-cut and every vertex of $s$ is isolated in $\sigma_4(T)$, then there exist $x\in{s}$ and $q\in\{1,2,3\}$ so that $\kappa(x)\not\in{T}$ and $V_x(T)$ is isometric to a subset of $S_q$.
\end{lemma}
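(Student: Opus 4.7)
The plan is a case analysis on $|A|$, where $A := \{x \in s : \kappa(x) \notin T\}$. Since $T$ is not corner-cut, $|A| \geq 1$. Put $B := \{x \in s : \deg_{\sigma_3(T)}(x) \leq 1\}$; by Lemma~\ref{lem.2}, $|B| \geq 4$. If $|A| \geq 5$, then $|A \cap B| \geq |A| + |B| - 8 \geq 1$, and any $x \in A \cap B$ has $\deg_{\sigma_4(T)}(x) = 0$ (by hypothesis) and $\deg_{\sigma_3(T)}(x) \leq 1$; hence $V_x(T)$ consists of the four corner-tetrahedron vertices, some edge midpoints, and at most one face centroid, and is therefore isometric to a subset of $S_1$.

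Assume from now on $|A| \leq 4$, so at least four corner simplices with apex in $s$ are in $T$. Lemma~\ref{lem.3} then gives $\deg_{\sigma_2(T)}(x) \leq 3$ for every $x \in s$, and Proposition~\ref{pr.3} makes each $y \in s \setminus A$ isolated in $\sigma_2(T)$; hence every $\sigma_2(T)$-edge incident to a vertex of $A$ goes to another vertex of $A$. If some $x \in A$ has $\deg_{\sigma_2(T)}(x) \leq 2$---which is automatic when $|A| \leq 3$ and also when $A$ contains an antipodal pair of $s$---the at most two corresponding corner-tetrahedron edges either form a matching ($V_x(T)$ isometric to a subset of $S_3$) or share a vertex ($V_x(T)$ isometric to a subset of $S_2$), and we are done.

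The remaining case is $|A| = 4$ with every such $\sigma_2$-degree equal to $3$, which forces $A$ to be pairwise at distance $2$ and hence to contain exactly one vertex per antipodal pair of $s$. At each $x \in A$, the three $\sigma_2(T)$-edges correspond to three pairwise-intersecting $2$-subsets of $[4]$ (two disjoint $2$-subsets would make two $A$-vertices antipodal), and such a family is necessarily either a \emph{star} (all sharing a common coordinate) or a \emph{triangle} (three subsets on three coordinates). If some $x \in A$ exhibits the star pattern, its $\sigma_2(T)$-edges all share a corner-tetrahedron vertex and $V_x(T)$ is isometric to a subset of $S_2$. Otherwise the pattern at every $x \in A$ is a triangle using the same three coordinates and omitting a common coordinate $i_4 \in [4]$; every $y \in A$ then has the same value $c$ at coordinate $i_4$, so $A$ is precisely the set of $s$-vertices of the $3$-face $\{x_{i_4} = c\}$, and $s \setminus A$ is the set of $s$-vertices of the opposite $3$-face $F := \{x_{i_4} = 1-c\}$.

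I expect this last sub-case to be the main obstacle, to be settled by a pigeonhole over the eight $3$-faces of $[0,1]^4$. Each of the four dist-$\sqrt{3}$ diagonals of $F$ joins an $s$-vertex of $F$ (which lies in $s \setminus A$) to a vertex of opposite parity, and by Proposition~\ref{pr.3} every $y \in s \setminus A$ is isolated in $\sigma_3(T)$; hence $F$ hosts no $\sigma_3(T)$-edge. If $A \cap B$ were empty, every $x \in A$ would satisfy $\deg_{\sigma_3(T)}(x) \geq 2$, giving $|\sigma_3(T)| = \sum_{x \in s}\deg_{\sigma_3(T)}(x) \geq 2|A| = 8$; since no $3$-face of $[0,1]^4$ contains more than one $\sigma_3(T)$-edge (the key step behind Proposition~\ref{pr.2}'s bound), the total of eight would force every $3$-face to host exactly one such edge, contradicting $F$'s count of zero. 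Hence $A \cap B \neq \emptyset$, and any $x \in A \cap B$ returns us to the $S_1$ case treated first.
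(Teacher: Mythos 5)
Your proof is correct and follows essentially the same route as the paper's: lemma \ref{lem.2} together with proposition \ref{pr.3} supplies the $S_1$ case, lemma \ref{lem.3} bounds the $\sigma_2$-degrees, the degree-$3$ situation splits according to whether the three tetrahedron edges form a star or a triangle, and the triangle configuration is eliminated by counting $\sigma_3$-edges against the eight $3$-dimensional faces of $[0,1]^4$. Your organization by the size of $A$, and the absorption of the paper's ``acyclic path'' sub-case into the observation that an all-degree-$3$ set $A$ can contain no antipodal pair, is a mild streamlining of the same argument rather than a different proof.
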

\begin{proof}
Assume that $T$ is not corner-cut and that every vertex of $s$ is isolated in $\sigma_4(T)$. One can check using figure \ref{fig.1} that for any point $x$ in $\{0,1\}^4$, $V_x(T)$ is isometric to a subset of $S_1$ if and only if $x$ is isolated in $\sigma_4(T)$ and has degree at most $1$ in $\sigma_3(T)$. According to lemma \ref{lem.2}, at least four vertices of $s$ have degree at most $1$ in $\sigma_3(T)$. If in addition one of these vertices is not the apex of a corner simplex of $\{0,1\}^4$ found in $T$, then one can find $x\in{s}$ so that $\kappa(x)$ does not belong to $T$ and $V_x(T)$ is isometric to a subset of $S_1$, proving that the desired property holds.

Assume that all the points $x\in{s}$ with degree at most $1$ in $\sigma_3(T)$ are so that $\kappa(x)\in{T}$. According to lemma \ref{lem.2}, then $T$ contains at least four corner simplices of $\{0,1\}^4$ with apex in $s$. In this case, it follows from lemma \ref{lem.3} that all the vertices of $s$ have degree at most $3$ in $\sigma_2(T)$. Now observe that if a point $x\in{s}$ has degree $1$ or $2$ in $\sigma_2(T)$ then $V_x(T)$ is isometric to a subset of $S_2$ or to a subset of $S_3$. Since $T$ is not corner-cut, the wanted result therefore holds when all the vertices of $s$ have degree at most $2$ in $\sigma_2(T)$.

Assume that some vertex of $s$ has degree $3$ in $\sigma_2(T)$. Using the symmetries of $\{0,1\}^4$, one can require that this point be $a$. It can be seen using figure \ref{fig.1} that, if $y$ is one of the three vertices adjacent to $a$ in $\sigma_2(T)$, then $y/a$ is the centroid of an edge of tetrahedron $\kappa(a)\setminus\{a\}$. Denote the set of these three edges by $W$. These edges can be placed in three ways: they can have a common vertex, be the edges of a triangle, or form an acyclic path. These three placements will now be discussed one by one. First. if the three elements of $W$ have a common vertex, one can assume by symmetry that this vertex is $e$. In this case $V_a(T)$ is a subset of $S_2$. Since $a$ is not isolated in $\sigma_2(T)$, then $\kappa(a)$ is not a face of $T$ and the result follows.

The second placement turns out to be impossible. Indeed if the three elements of $W$ are the edges of a triangle, then one can require using the symmetries of $\{0,1\}^4$, that this triangle be $\{b,c,i\}$. In this case, the vertices adjacent to $a$ in $\sigma_2(T)$ are $d$, $j$, and $k$ (see figure \ref{fig.1}). It follows that the points $x\in{s}$ so that $\kappa(x)$ is not a face of $T$ are precisely $a$, $d$, $j$, and $k$. These four points belong to the hyperplane of equation $x\cdot{u_3}=0$. Hence, none of them is a vertex of the $3$-dimensional face of $[0,1]^4$ found in the hyperplane of equation $x\cdot{u_3}=1$. Therefore, the sum of their degrees in $\sigma_3(T)$ is at most $7$ and one of these points has degree at most $1$ in $\sigma_3(T)$. This produces a contradiction with the assumption that every point of $s$ with degree at most $1$ in $\sigma_3(T)$ is the apex of a corner simplex of $\{0,1\}^4$ found in $T$.

Finally, assume that the elements of $W$ are the edges of an acyclic path. By symmetry, one can require that these three edges be $\{b,e\}$, $\{c,i\}$, and $\{e,i\}$. Hence, the vertices adjacent to $a$ in $\sigma_2(T)$ are $f$, $k$, and $m$ (see figure \ref{fig.1}). As a consequence, the points $x\in{s}$ so that $\kappa(x)$ does not belong to $T$ are precisely $a$, $f$, $k$, and $m$. Now observe that $f$ and $k$ have squared distance $4$. As a consequence, $f$ is not adjacent to $k$ in $\sigma_2(T)$, and its degree in $\sigma_2(T)$ is at most $2$. As already discussed above, this implies that $V_f(T)$ is isometric to a subset of $S_2$ or to a subset of $S_3$. As $f$ is not isolated in $\sigma_3(T)$, then $\kappa(f)$ is not a face of $T$, and the desired result follows. \qed
\end{proof}

Point configurations $S_1$, $S_2$, and $S_3$ are small enough to allow for a fast enumeration of their triangulations using TOPCOM: $S_1$ admits $4\,494$ triangulations partitioned into $842$ symmetry classes, $S_2$ admits $3\,214$ triangulations partitioned into $628$ symmetry classes, and $S_3$ admits $596$ triangulations partitioned into $118$ symmetry classes. Up to symmetries, this amounts to a total of $1\,588$ triangulations. Algorithm \ref{alg.1} can then be used to find all the elements of $L_a(S_q)$ for every $q\in\{1,2,3\}$. It follows from the definition of these three sets that the unique triangulation $U_0$ of $\kappa(a)\setminus\{a\}$ belongs to $L_a(S_1)$, to $L_a(S_2)$, and to $L_a(S_3)$. It turns out that $L_a(S_1)$ and $L_a(S_2)$ both contain two more triangulations. Consider the triangulation $U_1^-$ of $\{0,1\}^4/a$ whose maximal faces are the nine following tetrahedra:
$$
\begin{array}{lll}
\{b,c,f,l\}/a, & \{b,f,l,m\}/a, & \{b,i,l,m\}/a,\\
\{c,f,g,l\}/a, & \{c,g,i,l\}/a, & \{e,f,g,l\}/a,\\
\{e,f,l,m\}/a, & \{e,g,l,m\}/a, & \{g,i,l,m\}/a\mbox{.}
\end{array}
$$

This triangulation is depicted in the left of figure \ref{fig.2}. One can see that circuits $\{b,c,f,g\}$, $\{b,f,i,m\}$, and $\{c,g,i,m\}$ are independently flippable in $U_1^-$. Denote by $U_1^+$ the triangulation obtained from $U_1^-$ by flipping these circuits. Triangulations $U_1^-$ and $U_1^+$ are symmetric with respect to the affine hull of $\{b,e,l\}$ and, therefore, they are isometric. The following result can be obtained within a few minutes using TOPCOM and algorithm \ref{alg.1} (see the supplementary material to this article for several detailed implementations):

\begin{proposition}\label{pr.5}
The following statements hold:
\begin{itemize}
\item[i.] $L_a(S_1)$ and $L_a(S_2)$ are equal to $\{U_0,U_1^-,U_1^+\}$,
\item[ii.] $U_0$ is the unique triangulation found in $L_a(S_3)$.
\end{itemize}

\end{proposition}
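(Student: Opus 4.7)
The proposition reduces to a finite combinatorial verification, and the plan is essentially computational.

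First, I would invoke TOPCOM to enumerate all triangulations of $S_1$, $S_2$, and $S_3$, producing the $842$, $628$, and $118$ symmetry classes mentioned in the text (totalling $1\,588$ up to symmetry). Simultaneously, I would extract, for each $q$, the list of circuits $z \in Z(a)$ with $z/a \subset S_q$, together with their Radon partitions $\{\varepsilon_a^-(z),\varepsilon_a^+(z)\}$ under the convention $a \in \varepsilon_a^-(z)$ given by Lemma \ref{lem.4}.

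Second, I would run Algorithm \ref{alg.1} on every triangulation $T$ in each list. For each candidate $T$ and each circuit $z$, the algorithm tests, using only set comparisons, whether the triangulation $\tau^-$ of $z/a$ (the one containing $\varepsilon_a^-(z)/a$) satisfies both $\tau^- \subset T$ and the link-equality condition defining flippability in $T$. A triangulation belongs to $L_a(S_q)$ precisely when no such circuit exists. Because the procedure involves no arithmetic, only membership and equality tests on sets of vertex labels, the full sweep completes in minutes.

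Third, I would identify the output with the claimed triangulations. The inclusion $U_0 \in L_a(S_q)$ for every $q$ is immediate from the definition: $U_0$ has just a single maximal face, so the link-equality condition forcing a flippable circuit cannot be met nontrivially, and no circuit contraction is a proper subface of this tetrahedron. For $S_1$ and $S_2$, the isometry of $\{0,1\}^4/a$ that fixes the affine hull of $\{b,e,l\}/a$ exchanges $U_1^-$ and $U_1^+$, so membership for one follows from membership for the other, and the algorithm confirms both along with the absence of any further surviving triangulation. For $S_3$, the vertices $f/a$ and $g/a$ are absent from the point configuration, yet both $U_1^-$ and $U_1^+$ use them as vertices in their tetrahedra; hence neither is even a triangulation of $S_3$, and the algorithm then verifies that no triangulation of $S_3$ other than $U_0$ survives.

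The main obstacle is one of implementation reliability rather than mathematical depth. The correctness of Algorithm \ref{alg.1} is guaranteed by Theorems \ref{thm.1} and \ref{thm.2} together with the definition of $L_a(S)$ from Section \ref{se.4}, and the supplementary material provides independent implementations as a cross-check. The remaining task is purely bookkeeping over the $1\,588$ triangulations.
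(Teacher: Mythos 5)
Your proposal is correct and follows the paper's approach exactly: the paper establishes Proposition~\ref{pr.5} precisely by enumerating the triangulations of $S_1$, $S_2$, and $S_3$ with TOPCOM and filtering them with Algorithm~\ref{alg.1}, which is what you describe. Your additional sanity checks (the trivial membership of $U_0$ in each $L_a(S_q)$, the isometry exchanging $U_1^-$ and $U_1^+$, and the fact that $U_1^{\pm}$ cannot even be triangulations of $S_3$ since $f/a$ and $g/a$ are absent from it) are all sound.
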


Let $s$ be an element of $\{E,O\}$. Consider a triangulation $T$ of $\{0,1\}^4$ and a point $x\in{s}$ so that $\kappa(x)$ does not belong to $T$. Further assume that for some integer $q\in\{1,2,3\}$, $V_x(T)$ is isometric to a subset of $S_q$. Following corollary \ref{cor.1} and proposition \ref{pr.5}, $T$ can be flipped to a triangulation $T'$ that contains all the faces of $T$ that do not admit $x$ as a vertex and so that $\link_{T'}(\{x\})/x$ is isometric to either $U_0$ or $U_1^-$. Observe that for all $y\in{s\setminus\{x\}}$, point $x$ is not a vertex of $\kappa(y)$. As a consequence, $T'$ contains all the corner simplices of $\{0,1\}^4$ with apex in $s\setminus\{x\}$ that are already found in $T$. If in addition, $\link_{T'}(\{x\})/x$ is isometric to $U_0$, then $\kappa(x)\in{T'}$ and the sequence of flips that transforms $T$ into $T'$ increases  by one the number of corner simplices of $\{0,1\}^4$ with apex in $s$ found in the triangulation.
\begin{figure}
\begin{centering}
\includegraphics{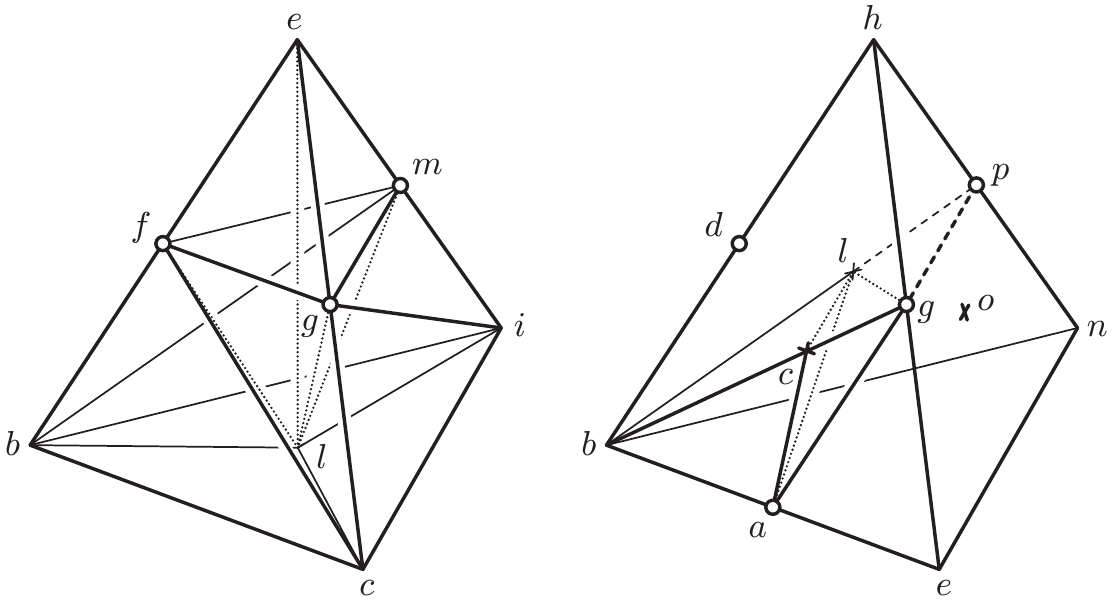}
\caption{Triangulation $U_1^-$ (left), and tetrahedra $\{a,b,c,l\}/f$ and $\{a,b,g,l\}/f$ depicted within point configuration $\{0,1\}^4/f$ (right). Every point has been labeled according to the vertex of $\{0,1\}^4$ it is projected from by the homogeneous contraction. The dashed lines sketch the intersection of $\conv(\{b,e,h,n\}/f)$ with  the affine hull of $\{b,c,g,l\}/f$.}\label{fig.2}
\end{centering}
\end{figure}
This property fails, though, when $\link_{T'}(\{x\})/x$ is not isometric to $U_0$ but to $U_1^-$. A careful study of the triangulations $T$ of $\{0,1\}^4$ so that $\link_T(\{x\})/x$ is isometric to $U_1^-$ is a first step towards solving this problem. Thanks to the symmetries of $[0,1]^4$, it can be assumed without loss of generality that $x$ is equal to $a$ and that $\link_T(\{x\})/x$ is precisely triangulation $U_1^-$. The following lemma, that will be invoked in the proof of theorem \ref{thm.3}, further requires that $T$ contains $\kappa(d)$. This additional condition will also turn out not to be restrictive.

\begin{lemma}\label{lem.6}
Let $T$ be a triangulation of $\{0,1\}^4$. If $\link_T(\{a\})/a$ is equal to $U_1^-$ and $\kappa(d)$ is a face of $T$, then there exist a circuit $z_1\in{Z(a)}$ flippable in $T$ and a circuit $z_2\in{Z(a)}$ flippable in $\mathfrak{F}(T,z_1)$ so that:
\begin{itemize}
\item[i.] $V_a(\mathfrak{F}(\mathfrak{F}(T,z_1),z_2))$ is obtained removing $f$ from $V_a(T)$,
\item[ii.] For all $t\in{T}$, if $\{a,f\}\cap{t}=\emptyset$, then $t\in\mathfrak{F}(\mathfrak{F}(T,z_1),z_2)$.
\end{itemize}
\end{lemma}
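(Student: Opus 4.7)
The plan is to exhibit two explicit circuits $z_1, z_2 \in Z(a)$ whose successive flips produce a triangulation $T'' = \mathfrak{F}(\mathfrak{F}(T, z_1), z_2)$ with $V_a(T'') = V_a(T) \setminus \{f\}$, while preserving all faces of $T$ disjoint from $\{a, f\}$. The starting observation is that in $U_1^-$ every tetrahedron contains $l/a$, and the five tetrahedra containing $f/a$ are exactly $\{b,c,f,l\}/a$, $\{b,f,l,m\}/a$, $\{c,f,g,l\}/a$, $\{e,f,g,l\}/a$, $\{e,f,l,m\}/a$; they share the edge $\{f,l\}/a$, whose link in $U_1^-$ is the pentagonal cycle $b$--$c$--$g$--$e$--$m$--$b$. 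The corresponding five 4-simplices $\{a\}\cup t$ of $T$ therefore share the 2-face $\{a,f,l\}$. The hypothesis $\kappa(d)\in T$ places the 4-simplex $\{b,c,d,h,l\}$ in $T$, so the 2-face $\{b,c,l\}$ admits $\{a,f\}$ in its link (from $\{a,b,c,f,l\}$) together with $\{d,h\}$ (from $\kappa(d)$); this additional ``outside'' structure is what makes the first flip possible.

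I would then identify $z_1$ as a circuit of $Z(a)$ whose one triangulation $\tau$ lies inside $T$ with a common link $\lambda$ across every maximal face of $\tau$. The common link requirement is the delicate part, since it cannot be verified inside $U_1^-$ alone: it depends on faces of $T$ supplied by $\kappa(d)$. The geometric picture in Figure~\ref{fig.2}, which works in the contraction at $f$ rather than at $a$, suggests that $z_1$ realises a local retriangulation exchanging the 4-simplex $\{a,b,c,f,l\}$ for $\{a,b,f,g,l\}$, so that the tetrahedron $\{a,b,g,l\}/f$ appears in $\link_{\mathfrak{F}(T,z_1)}(\{f\})/f$; the dashed intersection of $\conv(\{b,e,h,n\}/f)$ with the affine hull of $\{b,c,g,l\}/f$ indicates that this exchange is only consistent with $T$ because of the face structure forced by $\kappa(d)$. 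By Theorem~\ref{thm.2} this first flip strictly shrinks $D_a(T)$ and does not introduce a new vertex into the link of $a$. I would then identify $z_2\in Z(a)$, flippable in $\mathfrak{F}(T,z_1)$, whose flip completes the elimination of $f$ from $V_a$, again by an appeal to Theorem~\ref{thm.2}.

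To deduce conclusion (i), it suffices to verify that in $\link_{T''}(\{a\})/a$ the vertex $f/a$ no longer appears; this follows from tracking which tetrahedra of $U_1^-$ (or of the link at the intermediate stage) remain. For conclusion (ii), the plan is to arrange that $\{a,f\}\subset\varepsilon_a^-(z_1)$ and $\{a,f\}\subset\varepsilon_a^-(z_2)$; since every face removed by the flip of a circuit $z$ admits $\varepsilon_a^-(z)$ as a subset (noted after the definition of $\mathfrak{F}$ in Section~\ref{se.2}), this guarantees that only faces containing both $a$ and $f$ are ever removed, so every face of $T$ meeting neither $a$ nor $f$ persists through both flips. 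The main obstacle is the flippability check for $z_1$ in $T$: the required coincidence of links $\lambda$ across the maximal faces of $\tau$ is not visible from $U_1^-$ alone, and it is precisely the 4-simplex $\{b,c,d,h,l\}=\kappa(d)$ that supplies the matching links needed on the ``$f$-free'' side of the star of $\{a,f\}$. Once this step is carried out, the analogous check for $z_2$ in $\mathfrak{F}(T,z_1)$ proceeds along the same lines, and Theorem~\ref{thm.2} delivers the control on $V_a$ needed for conclusion (i).
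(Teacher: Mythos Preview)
Your plan has the right skeleton---two explicit flips, the first certified by working in the contraction at $f$ rather than at $a$, with $\kappa(d)\in T$ supplying the missing piece---but two of the specific mechanisms you commit to are not what actually happens, and one of them would block the argument.

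The paper takes $z_1=\{b,c,f,g\}$ and $z_2=\{a,b,e,f\}$. Note that $z_1$ does \emph{not} contain $a$ (the clause ``$z_1\in Z(a)$'' in the lemma statement is a slip; the proof never uses it). The Radon partition of $z_1$ is $\{\{b,g\},\{c,f\}\}$, so the faces removed by this first flip are exactly those containing $\{c,f\}$: they all contain $f$, but not necessarily $a$. Your proposed criterion $\{a,f\}\subset\varepsilon_a^-(z_1)$ is therefore too strong, and in fact no circuit with that property is flippable in $T$: the only $4$-element candidate is $\{a,b,e,f\}$, and one checks directly in $U_1^-$ that the links of $\{b,f\}/a$ and $\{e,f\}/a$ differ (the first contains $\{c,l\}/a$, the second $\{g,l\}/a$). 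This is precisely why the preparatory flip of $z_1$ is required before $z_2$ becomes flippable. For conclusion~(ii) one only needs that each $\varepsilon^-(z_j)$ meet $\{a,f\}$, and indeed $\{c,f\}\ni f$ and $\{a,f\}$ both do.

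Your reading of how $\kappa(d)$ enters is also off. It is not that $\kappa(d)=\{b,c,d,h,l\}$ \emph{supplies} the edge $\{d,h\}$ to some link and thereby matches another link; rather, $\kappa(d)\in T$ forces $\{d,f\}\notin T$ (Proposition~\ref{pr.3}), so $d/f$ is absent from $\link_T(\{f\})/f$. In the half-space of $\aff(\kappa(f)\setminus\{f\})$ bounded by $\aff(\{b,c,g,l\}/f)$ and containing $h/f$, the only points of $\{0,1\}^4/f$ are $d/f$ and $h/f$; with $d/f$ excluded, the tetrahedra of $\link_T(\{f\})/f$ adjacent to $\{a,b,c,l\}/f$ and $\{a,c,g,l\}/f$ across the triangles $\{b,c,l\}/f$ and $\{c,g,l\}/f$ are forced to have $h/f$ as their fourth vertex. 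That is what makes the links of the maximal faces of $\tau^-(z_1)/f$ coincide, and Theorem~\ref{thm.1} (applied at $f$, with $\varepsilon_f^-(z_1)=\{c,f\}$) then yields flippability of $z_1$ in $T$. After this flip, $\{b,e,f\}/a$ becomes flippable in $\link_{T'}(\{a\})/a$ and Theorem~\ref{thm.1} at $a$ handles $z_2$.
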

\begin{proof}
Assume that $T$ contains $\kappa(d)$ and that $\link_T(\{a\})$ is precisely equal to $U_1^-$. Consider the two following circuits:
$$
z_1=\{b,c,f,g\}\mbox{ and }z_2=\{a,b,e,f\}\mbox{.}
$$

It will be shown that $z_1$ and $z_2$ are flippable in this order in $T$. Observe first that $z_1/f=\{b,c,g\}/f$ and that $z_2/a=\{b,e,f\}/a$. Now consider triangulation $U_1^-$, depicted in the left of figure \ref{fig.2}. One can see that tetrahedra $\{b,c,f,l\}/a$ and $\{c,f,g,l\}/a$ both belong to $\link_T(\{a\})/a$. By definition of the link,
$$
\{a,b,c,f,l\}\in{T}\mbox{ and }\{a,c,f,g,l\}\in{T}\mbox{.}
$$

As a direct consequence, tetrahedra $\{a,b,c,l\}/f$ and $\{a,c,g,l\}/f$ belong to $\link_T(\{f\})/f$. These two tetrahedra are shown in the right of figure \ref{fig.2} within $\{0,1\}^4/f$. One can see in this figure that, according to the definition of flips, if tetrahedra $\{b,c,h,l\}/f$ and $\{c,g,h,l\}/f$ both belong to triangulation $\link_T(\{f\})/f$ then circuit $z_1/f$ is flippable in this triangulation. It turns out that these two tetrahedra actually belong to $\link_T(\{f\})/f$. Indeed, denote by $H$ the affine hull of $\kappa(f)\setminus\{f\}$. Consider the the open half-space of $H$ that contains $h/f$ and that is bounded by the affine hull of $\{b,c,g,l\}/f$ (see the right of figure \ref{fig.2}). Observe that the only element of $\{0,1\}^4/f$ other than $h/f$ found in this open half space is $d/f$. As $\kappa(d)\in{T}$, though, it follows from proposition \ref{pr.2} that edge $\{d,f\}$ does not belong to $T$ and that $d/f$ is not a vertex of $\link_T(\{f\})/f$. 

This proves that the two tetrahedra in $\link_T(\{f\})/f$ that admit $\{b,c,l\}/f$ and $\{c,g,l\}/f$ as respective subsets and that do not admit $a/f$ as their last vertex necessarily both contain point $h/f$. As discussed above, it follows that circuit $z_1/f$ is flippable in $\link_T(\{f\})/f$. Now observe that the Radon partition of circuit $z_1$ is $\{\{b,g\},\{c,f\}\}$. Further note that $\varepsilon_f^-(z_1)=\{c,f\}$. Hence, $\varepsilon_f^-(z_1)/f$ is equal to $\{c/f\}$. As this singleton is contained in $\link_T(\{f\})/f$, it follows from theorem \ref{thm.1} that circuit $z_1$ is flippable in $T$.

Denote $T'=\mathfrak{F}(T,z_1)$. Observe that $\link_{T'}(\{a\})/a$ is obtained by flipping circuit $\{b,c,f,g\}/a$ in $\link_T(\{a\})/a$. One can therefore see in figure \ref{fig.2} that circuit $z_2/a$, that was not flippable in $\link_T(\{a\})/a$ becomes flippable in $\link_{T'}(\{a\})/a$. Moreover the Radon partition of $z_2$ is $\{\{a,f\},\{b,e\}\}$ and $\varepsilon_a^-(z_2)=\{a,f\}$. Hence, $\varepsilon_a^-(z_2)/a$ is equal to $\{f/a\}$. Since $\link_{T'}(\{a\})/a$ contains this singleton, it follows from theorem \ref{thm.1} that circuit $z_2$ is flippable in $T'$. Observe that flipping $z_2$ in $T'$ removes edge $\{a,f\}$. As $f/a$ is the only vertex that has been removed from $V_a(T)$ by the two consecutive flips, the first assertion in the statement of the lemma holds. Further observe that all the faces removed when flipping $z_1$ in $T$ and then $z_2$ in $T'$ contain point $a$ or point $f$. As a consequence, the second assertion also holds. \qed
\end{proof}

Thanks to lemma \ref{lem.6}, it is possible to avoid being stuck by a triangulation isometric to $U_1^-$ on the way to introduce a new corner simplex into a triangulation of the $4$-dimensional cube:

\begin{theorem}\label{thm.3}
Let $T$ be a triangulation of $\{0,1\}^4$, $s$ an element of $\{E,O\}$, and $x$ a vertex of $s$. If $\link_T(\{x\})/x$ is isometric to $U_1^-$ or to $U_1^+$, then there exists a path in $\gamma(\{0,1\}^4)$ that connects $T$ to a triangulation $T'$ so that $\kappa(x)\in{T'}$ and for all $t\in{T}$, if $t\cap{s}=\emptyset$ then $t\in{T'}$.
\end{theorem}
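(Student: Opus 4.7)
By the vertex-transitivity of the isometry group of $[0,1]^4$ and the isometry between $U_1^-$ and $U_1^+$ noted just before Proposition~\ref{pr.5}, one may assume without loss of generality that $x=a$ (hence $s=E$) and that $\link_T(\{a\})/a$ is equal to $U_1^-$ itself. Under this normalization, the faces of $T$ disjoint from $s$ are exactly the faces of $T$ contained in $O=\{b,c,e,h,i,l,n,o\}$. The plan is to chain Lemma~\ref{lem.6}, Corollary~\ref{cor.1}, and Proposition~\ref{pr.5}: first use Lemma~\ref{lem.6} to remove the vertex $f$ from $V_a(T)$, then use Corollary~\ref{cor.1} to collapse the link of $\{a\}$ down to $U_0$, which is equivalent to $\kappa(a)\in T'$.

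The first task is to secure the hypothesis $\kappa(d)\in T$ needed by Lemma~\ref{lem.6}. The triangulation $U_1^-$ admits a $3$-fold rotational symmetry that fixes $a$, $e$, and $l$ and cyclically permutes the triples $(b,c,i)$, $(f,g,m)$, and $(d,k,j)$; this symmetry lifts to an isometry of $\{0,1\}^4$ preserving $E$ and $O$. Consequently, Lemma~\ref{lem.6} has two companion statements in which the roles of $(d,f)$ are played by $(k,g)$ and by $(j,m)$. The constraint $\link_T(\{a\})/a=U_1^-$ forces $\{a,l\}$ to be the only edge of $\sigma_3(T)$ through $a$, and a brief inspection of the diagonals of the eight $3$-faces of $[0,1]^4$ through $a$ shows that $\{e,l\}$ is the unique edge of $\sigma_4(T)$; from these facts I would deduce that each of $d, j, k$ is already isolated in both $\sigma_3(T)$ and $\sigma_4(T)$. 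By Proposition~\ref{pr.3}, it then remains, up to the $3$-fold symmetry above, to ensure that $d$ is also isolated in $\sigma_2(T)$, possibly after a short sequence of preparatory flips on circuits contained in $E$ (which preserve every face of $T$ lying in $O$).

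Once $\kappa(d)\in T$ has been secured, applying Lemma~\ref{lem.6} produces a triangulation $T_1$ with $V_a(T_1)=V_a(T)\setminus\{f\}=\{b,c,e,g,i,l,m\}$, and conclusion~(ii) of that lemma guarantees that every face of $T$ disjoint from $\{a,f\}\subset E$ --- in particular every face contained in $O$ --- is still a face of $T_1$. Applying Corollary~\ref{cor.1} at $a$ to $T_1$ then yields $T'$ with $\link_{T'}(\{a\})/a\in L_a(V_a(T_1))$ and with every face of $T_1$ not containing $a$ preserved in $T'$. Since $V_a(T_1)\subset S_1$, each triangulation of $V_a(T_1)$ is also a triangulation of $S_1$, and hence $L_a(V_a(T_1))\subset L_a(S_1)=\{U_0,U_1^-,U_1^+\}$ by Proposition~\ref{pr.5}(i); but $U_1^-$ and $U_1^+$ both contain $f$ in their vertex set while $f\notin V_a(T_1)$, so neither of them can arise and one is forced to $\link_{T'}(\{a\})/a=U_0$, i.e.\ $\kappa(a)\in T'$. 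Combining the preservation clauses of Lemma~\ref{lem.6} and Corollary~\ref{cor.1}, every face of $T$ contained in $O$ is still a face of $T'$, which is exactly the required conclusion.

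The hard part is the first step: guaranteeing that at least one of $\kappa(d),\kappa(j),\kappa(k)$ already lies in $T$, or else introducing one by a short sequence of preparatory flips on $E$-supported circuits without disturbing the part of $T$ in $O$, will be the main obstacle of the argument.
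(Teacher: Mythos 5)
Your second and third paragraphs track the paper's argument faithfully (indeed you spell out the final $L_a(V_a(T_1))\subset L_a(S_1)$ step more explicitly than the paper does), but the first step --- securing $\kappa(d)\in T$ --- is a genuine gap, and you acknowledge as much. Moreover, the direction you sketch for closing it is off: you propose to make $d$ isolated in $\sigma_2(T)$ by unspecified ``preparatory flips on $E$-supported circuits,'' but no such ad hoc flips are needed, and trying to construct them by hand would be a dead end (there is no reason a single flip, or any short explicit sequence, clears the $\sigma_2$-edges at $d$ while preserving the rest). The paper instead closes the gap with the \emph{same} machinery you use in the second half, applied at $d$ rather than at $a$: the four edges $\{a,l\}$, $\{c,f\}$, $\{f,l\}$, $\{g,l\}$ of $T$ (read off from $U_1^-$) are diagonals of the four $3$-dimensional faces of $[0,1]^4$ containing $d$, none of them through $d$, so $d$ is isolated in $\sigma_3(T)$; together with $\{e,l\}$ being the unique edge of $\sigma_4(T)$, this gives that $V_d(T)$ is isometric to a subset of $S_1\setminus\{l/a\}$. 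Since $U_1^-$ and $U_1^+$ both use the distance-$3$ vertex, Proposition~\ref{pr.5} forces $L_d(V_d(T))=\{U_0\}$, and Corollary~\ref{cor.1} then produces a path to $T''$ with $\kappa(d)\in T''$ that removes only faces containing $d$ --- no prior control on $\sigma_2(T)$ at $d$ is required, because the corollary's flips handle whatever squared-length-$2$ edges are incident to $d$. Since $\{a,d\}$ is not an edge of $T$, these flips leave $\mathrm{star}_T(\{a\})$ untouched, so $\link_{T''}(\{a\})/a=U_1^-$ and Lemma~\ref{lem.6} applies to $T''$ as in your second paragraph.

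Two smaller points. First, your deduction that $d$, $j$, $k$ are isolated in $\sigma_3(T)$ does not follow from the two facts you cite ($\{a,l\}$ being the only $\sigma_3$-edge at $a$ and $\{e,l\}$ the unique $\sigma_4$-edge); one must actually inspect the four $3$-cubes through $d$ and find in each a diagonal of $U_1^-$ avoiding $d$, as above. Second, the $3$-fold symmetry of $U_1^-$ you invoke is real but unnecessary: the vertex $d$ alone suffices, so the ``choose one of $\kappa(d),\kappa(j),\kappa(k)$'' branching you anticipate never arises.
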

\begin{proof}
Assume that $\link_T(\{x\})/x$ is isometric to $U_1^-$ or to $U_1^+$. By symmetry, one can further assume that $x=a$ and that $\link_T(\{x\})/x=U_1^-$. In this case, $s$ is equal to $E$. Looking at the sketch of triangulation $U_1^-$, in the left of figure \ref{fig.2}, one obtains from the definition of $\link_T(\{a\})/a$ that edges $\{a,l\}$, $\{c,f\}$, $\{f,l\}$, and $\{g,l\}$ belong to $T$. Figure \ref{fig.3} shows these four edges placed into the $3$-dimensional cubical faces of $[0,1]^4$ they are diagonals of. These cubes are precisely the $3$-dimensional faces of $[0,1]^4$ that admit $d$ as a vertex. Since $d$ is not a vertex of any of the diagonals shown in this figure, and since $T$ cannot contain two diagonals of a same cube then $d$ is isolated in $\sigma_3(T)$.
 
 \begin{figure}
\begin{centering}
\includegraphics{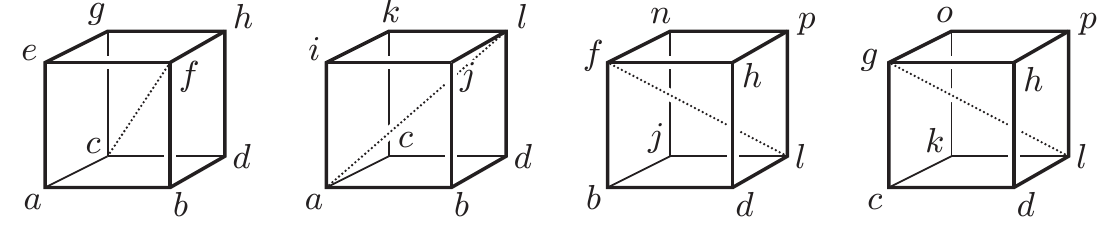}
\caption{The four $3$-dimensional cubical faces of $[0,1]^4$ that admit $d$ as a vertex. Each of these cubes has one of its diagonals in $U_1^-$, depicted here as a dotted line.}\label{fig.3}
\end{centering}
\end{figure}

Further observe that $\{e,l\}$ is an edge of graph $\sigma_4(T)$ (indeed, as can be seen in figure \ref{fig.2}, $\link_T(\{a\})/a$ contains $\{e,l\}/a$). It then follows from proposition \ref{pr.2} that $e$ and $l$ are the only two non-isolated vertices of $\sigma_4(T)$. In particular, $d$ is not only isolated in $\sigma_3(T)$, but also in $\sigma_4(T)$. This proves that $V_d(T)$ is isometric to a subset of $S_1\setminus\{l/a\}$. Now observe that $L_a(S_1\setminus\{l/a\})$ is a subset of $L_a(S_1)$. Following proposition \ref{pr.5}, the only triangulation of $S_1\setminus\{l/a\}$ contained in $L_a(S_1)$ is $U_0$. As a consequence, this triangulation is the unique element of $L_a(S_1\setminus\{l/a\})$. Since $V_d(T)$ is isometric to a subset of $S_1\setminus\{l/a\}$, one obtains that $L_d(V_d(T))$ is a singleton whose element is the unique triangulation of $\kappa(d)\setminus\{d\}$. Hence, according to corollary \ref{cor.1}, $T$ can be flipped to a triangulation $T''$ so that $\kappa(d)\in{T''}$ and for all $t\in{T}$, if $d\not\in{t}$ then $t\in{T''}$.

As $\{a,d\}$ is not an edge of $T$ (see the left of figure \ref{fig.2}), the star of $\{a\}$ in $T$ is not affected by the flips that transform $T$ into $T''$. In particular, $\link_{T''}(\{a\})/a$ is equal to $U_1^-$. Hence $T''$ satisfies the conditions of lemma \ref{lem.6}, and there exists a path in $\gamma(\{0,1\}^4)$ from triangulation $T''$ to a triangulation $T'$ so that $\kappa(a)\in{T'}$ and for all $t\in{T''}$, if $\{a,f\}\cap{t}$ is empty then $t\in{T'}$. It follows that all the elements of $T$ that do not contain $a$, $d$, or $f$ are still found in $T''$. As these three points belong to $s$, $T'$ has the desired properties. \qed
\end{proof}

According to proposition \ref{pr.5}, the next result is an immediate consequence of corollary \ref{cor.1} and of theorem \ref{thm.3}:

\begin{corollary}\label{cor.2}
Let $T$ be a triangulation of $\{0,1\}^4$, $s$ an element of $\{E,O\}$, and $x\in{s}$ a point. If for some $q\in\{1,2,3\}$, $V_x(T)$ is isometric to a subset of $S_q$, then there exists a path in $\gamma(\{0,1\}^4)$ that connects $T$ to a triangulation $T'$ so that $\kappa(x)$ belongs to $T'$ and for all $t\in{T}$, if $t\cap{s}=\emptyset$ then $t\in{T'}$.
\end{corollary}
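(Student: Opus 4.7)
The plan is to chain Corollary~\ref{cor.1}, Proposition~\ref{pr.5}, and Theorem~\ref{thm.3}. First I would apply Corollary~\ref{cor.1} at the vertex $x$ to produce a triangulation $T''$ connected to $T$ by a path in $\gamma(\{0,1\}^4)$, such that $\link_{T''}(\{x\})/x \in L_x(V_x(T))$ and every face of $T$ not containing $x$ persists in $T''$.

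Next I would translate the hypothesis that $V_x(T)$ is isometric to a subset of $S_q$ into a statement about $\link_{T''}(\{x\})/x$. Since flippability of a circuit $z/x$ in a triangulation and the identity of $\varepsilon_x^-(z)/x$ depend only on the abstract triangulation together with the distinguished vertex $x$, the ambient isometry sends $\link_{T''}(\{x\})/x$ to a triangulation of a subset of $S_q$ that still lies in $L_x(S_q)$. Proposition~\ref{pr.5} then forces $\link_{T''}(\{x\})/x$ to be isometric either to $U_0$, or, only when $q\in\{1,2\}$, to $U_1^-$ or $U_1^+$.

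In the $U_0$ case, $\link_{T''}(\{x\})/x$ is the unique triangulation of $\kappa(x)\setminus\{x\}$, so $\kappa(x)\in T''$ and I may set $T'=T''$. Otherwise I would apply Theorem~\ref{thm.3} to $T''$ to obtain a triangulation $T'$ reachable from $T''$ by flips with $\kappa(x)\in T'$ and every face of $T''$ disjoint from $s$ preserved in $T'$. The preservation condition for $T$ is then automatic: any $t\in T$ with $t\cap s=\emptyset$ in particular avoids $x$, so Corollary~\ref{cor.1} already gives $t\in T''$; in the $U_0$ case $T'=T''$ concludes, while in the $U_1^\pm$ case the disjointness $t\cap s=\emptyset$ lets Theorem~\ref{thm.3} carry $t$ forward into $T'$.

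The only step requiring any real attention is the transfer from $L_x(V_x(T))$ to $L_x(S_q)$, which I would justify by observing that the two defining conditions of $L_x$ (flippability of $z/x$ and membership of $\varepsilon_x^-(z)/x$) are intrinsic to the triangulation together with the marked vertex $x$ and hence invariant under isometry; the rest of the argument is routine bookkeeping built on the pieces already assembled.
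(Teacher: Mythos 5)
Your proposal is correct and follows essentially the same route as the paper: apply Corollary~\ref{cor.1} at $x$, invoke Proposition~\ref{pr.5} (via the isometric embedding of $V_x(T)$ into $S_q$) to conclude that $\link_{T''}(\{x\})/x$ is isometric to $U_0$ or to $U_1^\pm$, finish directly in the first case and via Theorem~\ref{thm.3} in the second, with the same bookkeeping for faces disjoint from $s$. The only difference is that you make explicit the isometry-invariance of the defining conditions of $L_x$, which the paper leaves implicit.
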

\begin{proof}
Assume that there exists $q\in\{1,2,3\}$ so that $V_x(T)$ is isometric to a subset of $S_q$. According to corollary \ref{cor.1}, there exists a path in $\gamma(\{0,1\}^4)$ from $T$ to a triangulation $T''$ so that $\link_{T''}(\{x\})/x\in{L_x(V_x(T))}$ and for all $t\in{T}$, if $x\not\in{t}$ then $t\in{T''}$. If then follows from proposition \ref{pr.5} that $\link_{T''}(\{x\})/x$ is isometric to either $U_1^-$ or to $U_0$. In the latter case, $x$ is necessarily isolated in $\sigma_2(T'')$, in $\sigma_3(T'')$, and in $\sigma_4(T'')$ and according to proposition \ref{pr.3}, $\kappa(x)\in{T''}$. As in addition all the faces of $T$ that do not contain $x$ are necessarily found in $T''$, then the wanted properties hold by taking $T'=T''$.

Now assume that $\link_{T''}(\{x\})/x$ is isometric to $U_1^-$. In this case, theorem \ref{thm.3} provides a path in $\gamma(\{0,1\}^4)$ connects $T''$ to a triangulation $T'$ so that $\kappa(x)$ is a face of $T'$ and for all $t\in{T''}$, if $t\cap{s}=\emptyset$, then $t\in{T'}$. Since $x\in{s}$, then every face of $T$ disjoint from $s$ still belong to $T'$, which completes the proof. \qed
\end{proof}

Combining lemma \ref{lem.5}, and corollary \ref{cor.2}, one finds that every triangulation of $\{0,1\}^4$ can be transformed into a corner-cut triangulation by a sequence of flips. This leads to the main result of this paper:

\begin{theorem}\label{thm.4}
The flip-graph of $\{0,1\}^4$ is connected.
\end{theorem}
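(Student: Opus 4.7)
The plan is to reduce the theorem to the classical connectedness of the subgraph $\rho(\{0,1\}^4)$ of regular triangulations, which is the $1$-skeleton of the secondary polytope. By lemma \ref{lem.1} every corner-cut triangulation lies in $\rho(\{0,1\}^4)$, so it suffices to show that every triangulation $T$ of $\{0,1\}^4$ is connected by a path of flips to some corner-cut triangulation.

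To construct this path I would iterate lemma \ref{lem.5} and corollary \ref{cor.2}. Given a non-corner-cut $T$, first select $s\in\{E,O\}$ so that every vertex of $s$ is isolated in $\sigma_4(T)$: this choice is always available because, by proposition \ref{pr.2}, $\sigma_4(T)$ has at most one edge, and any such edge has both endpoints in a common parity class since its squared length $4$ is even; picking $s$ to be the opposite parity class does the job. Lemma \ref{lem.5} then yields some $x\in s$ with $\kappa(x)\notin T$ together with some $q\in\{1,2,3\}$ so that $V_x(T)$ is isometric to a subset of $S_q$, and corollary \ref{cor.2} produces a flip path from $T$ to a triangulation $T'$ containing $\kappa(x)$ while preserving every face of $T$ disjoint from~$s$.

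The main obstacle is showing that this iteration reaches a corner-cut triangulation in finitely many steps. I would use the counting function $c(T,s)=|\{y\in s:\kappa(y)\in T\}|\in\{0,1,\dots,8\}$, which equals~$8$ precisely when $T$ is corner-cut with corners on~$s$, and argue that each application of corollary \ref{cor.2} makes it strictly grow while keeping the vertices of~$s$ isolated in~$\sigma_4$. In the easy subcase, where the intermediate link is isometric to $U_0$, only corollary \ref{cor.1} is invoked and every face of $T$ not containing~$x$ is preserved; in particular every $\kappa(y)$ with $y\in s\setminus\{x\}$ survives, and $\kappa(x)$ is added. In the hard subcase, where the intermediate link is isometric to $U_1^\pm$, theorem \ref{thm.3} and lemma \ref{lem.6} come in; their flips only touch faces containing a point of $\{x,d,f\}\subset s$, and since $f/x$ is a vertex of $U_1^-$ the edge $\{x,f\}$ belongs to $T$, so by proposition \ref{pr.3} one has $\kappa(f)\notin T$. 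Hence the procedure introduces both $\kappa(x)$ and $\kappa(d)$ while preserving every $\kappa(y)$ with $y\in s\setminus\{x,d,f\}$, giving $c(T',s)\geq c(T,s)+1$ once again. A direct check on the circuits being flipped shows that no $\sigma_4$-edge with endpoint in~$s$ is newly introduced, so $s$ can be kept fixed throughout the iteration, which therefore terminates at a corner-cut triangulation and completes the proof.
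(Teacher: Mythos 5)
Your overall strategy---reach a corner-cut triangulation via lemma \ref{lem.5} and corollary \ref{cor.2}, then invoke lemma \ref{lem.1} and the connectedness of $\rho(\{0,1\}^4)$---is the paper's strategy. The genuine gap is in how you make the process terminate. You iterate lemma \ref{lem.5} with a fixed $s$, which requires the hypothesis ``every vertex of $s$ is isolated in $\sigma_4$'' to hold at \emph{every} intermediate triangulation, and you discharge this with the claim that ``a direct check on the circuits being flipped shows that no $\sigma_4$-edge with endpoint in $s$ is newly introduced.'' No such check is available: corollary \ref{cor.1} is an existence statement obtained from a minimality argument on $D_x$ over an abstract directed graph of flips, so the circuits actually flipped are not under your control, and nothing in theorem \ref{thm.2} constrains edges away from the link of $x$. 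Worse, the feared phenomenon is geometrically possible: a flip introduces a new edge $\{u,v\}$ exactly when $\varepsilon_x^+(z)=\{u,v\}$, and there are circuits in $Z(a)$ of this form whose positive part is a long diagonal inside $E$ --- for instance $z=\{a,d,h,l,m\}$ has Radon partition $\{\{a,h,l\},\{d,m\}\}$, so flipping it (a flip of the type permitted by corollary \ref{cor.1} at $a$) creates the diagonal $\{d,m\}$ of squared length $4$ with both endpoints in $E$, even if $\sigma_4$ previously had no edge meeting $E$. So your invariant is not self-evident and your induction on $c(T,s)$ does not close.

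The paper avoids this entirely by not iterating: it takes $T'$ \emph{maximizing} the number of corner simplices with apex in $s'$ among triangulations reachable from $T$ while preserving all faces disjoint from $s'$, and derives a contradiction from a \emph{single} application of lemma \ref{lem.5} and corollary \ref{cor.2}; the $\sigma_4$-isolation hypothesis is then only needed at $T'$ itself, and is secured by a two-stage choice of the parity class (if the extremal $T''$ for $s$ has a diagonal inside $s$, one switches to $s'=\{0,1\}^4\setminus{s}$, and that diagonal, being disjoint from $s'$, is preserved thereafter and blocks any further $\sigma_4$-edge by proposition \ref{pr.2}). A secondary, fixable imprecision in your write-up: corollary \ref{cor.2} only guarantees preservation of faces \emph{disjoint from} $s$, and $\kappa(y)$ for $y\in{s}$ is not disjoint from $s$; the survival of these corner simplices should be deduced, as in the paper, from the preservation of $\kappa(y)\setminus\{y\}$ together with proposition \ref{pr.3}, rather than from an appeal to which faces the flips ``touch.''
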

\begin{proof}
Let $T$ be a triangulation of $\{0,1\}^4$ and $s$ an element of $\{E,O\}$. Consider the triangulations that are connected to $T$ by a path in $\gamma(\{0,1\}^4)$ and that contain all the faces of $T$ disjoint from $s$. Among these triangulations, let $T''$ be one that contains the largest possible number of corner simplices of $\{0,1\}^4$ with apex in $s$.

Since $\sigma_4(T'')$ has at most one edge, and since the squared length of such an edge is even, then either all the points of $s$ are isolated in $\sigma_4(T'')$ or exactly two of them are adjacent. If all the points of $s$ are isolated in $\sigma_4(T'')$ then denote $T'=T''$ and $s'=s$. Otherwise, let $s'$ be the element of $\{E,O\}$ distinct from $s$, and consider the triangulations that are connected to $T''$ by a path in $\gamma(\{0,1\}^4)$ and that contain all the faces of $T''$ disjoint from $s'$. Among these triangulations, let $T'$ be one that contains the largest possible number of corner simplices of $\{0,1\}^4$ with apex in $s'$. In this case, the vertices of $s'$ are necessarily isolated in $\sigma_4(T')$. Indeed, $\sigma_4(T'')$ contains an edge whose vertices are not in $s'$. Therefore this edge still belongs to $\sigma_4(T')$ and according to proposition \ref{pr.3}, no other edge is found in $\sigma_4(T')$.

According to this construction, there exists a path in $\gamma(\{0,1\}^4)$ from $T$ to $T'$, so that all the elements of $s'$ are isolated in $\sigma_4(T')$. Moreover, the number of corner simplices of $\{0,1\}^4$ with apex in $s'$ found in $T'$ cannot be increased by a sequence of flips without removing from $T'$ a face disjoint from $s'$. Now, assume that $T'$ is not corner-cut. Since all the vertices of $s'$ are isolated in $\sigma_4(T')$, it follows from lemma \ref{lem.5} that there exists a point $x\in{s'}$ and an integer $q\in\{1,2,3\}$ so that $V_x(T')$ is isometric to a subset of $S_q$ and $\kappa(x)\not\in{T'}$. In particular, $T'$ satisfies the conditions of corollary \ref{cor.2} and one can find a triangulation $T^{(3)}$ connected to $T'$ by a path in $\gamma(\{0,1\}^4)$ so that $T^{(3)}$ contains $\kappa(x)$ and every face of $T'$ disjoint from $s'$. In particular, for any $y\in{s'}$ so that $\kappa(y)\in{T'}$, $T^{(3)}$ contains $\kappa(y)\setminus\{y\}$. As a consequence, $y$ is isolated in graphs $\sigma_2(T^{(3)})$, $\sigma_3(T^{(3)})$, and $\sigma_4(T^{(3)})$, and it follows from proposition \ref{pr.3} that $\kappa(y)\in{T^{(3)}}$. In other words, a sequence of flips was found that increases the number of corner simplices of $\{0,1\}^4$ with apex in $s'$ found in $T'$. Moreover, this sequence of flips does not remove from $T'$ any face disjoint from $s'$.

This produces a contradiction, proving that $T'$ is corner-cut. According to lemma \ref{lem.1}, a path in $\gamma(\{0,1\}^4)$ therefore connects triangulation $T$ to a regular triangulation. The result then follows from the connectedness of the subgraph $\rho(\{0,1\}^4)$ induced by regular triangulations in $\gamma(\{0,1\}^4)$. \qed
\end{proof}

As mentioned in the introduction, the only algorithm efficient enough to completely enumerate the triangulations of $\{0,1\}^4$ consists in exploring the flip-graph of $\{0,1\}^4$, and its validity is based on the connectedness of this graph. Theorem \ref{thm.4} solves this issue. Using TOPCOM, one can therefore perform this enumeration in (much) less than an hour, with the following result:

\begin{corollary}
The vertex set of the $4$-dimensional cube admits $92\,487\,256$ triangulations, partitioned into $247\,451$ symmetry classes.
\end{corollary}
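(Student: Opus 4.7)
The plan is to reduce the statement to a purely computational task that is made valid by Theorem \ref{thm.4}. The flip-graph exploration method works as follows: start from any single triangulation of $\{0,1\}^4$ (for instance, one of the eight corner-cut triangulations made explicit in Section \ref{se.3}), compute all circuits flippable in it, apply each flip to obtain neighboring triangulations, and iterate breadth-first, maintaining a hash table of triangulations already visited. When the queue is exhausted, one has recovered exactly the connected component of the starting triangulation in $\gamma(\{0,1\}^4)$. This is precisely the enumeration strategy implemented by TOPCOM, and it is the only strategy known to be tractable on $\{0,1\}^4$.

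The key point, and the conceptual content of this corollary, is that the output of such a search is guaranteed to be the \emph{complete} list of triangulations only if every triangulation lies in the same connected component as the seed. This is exactly what Theorem \ref{thm.4} supplies. Without that theorem, the enumeration would merely produce a lower bound on the number of triangulations; with it, the count produced by the traversal is the exact cardinality of the vertex set of $\gamma(\{0,1\}^4)$.

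To obtain the number of symmetry classes, I would feed TOPCOM the hyperoctahedral group of $[0,1]^4$, which has order $2^4\cdot 4!=384$ and acts on $\{0,1\}^4$ by permutation and coordinatewise negation. TOPCOM then records one canonical representative per orbit under this action while performing the same breadth-first traversal of $\gamma(\{0,1\}^4)$; connectedness again guarantees that every orbit is visited. Running the computation (which takes well under an hour on a current machine) yields the two reported integers $92\,487\,256$ and $247\,451$.

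There is no mathematical obstacle left at this stage: the only nontrivial ingredient is Theorem \ref{thm.4}, and the remainder is bookkeeping performed by a well-tested implementation. The mild practical issue is ensuring that the enumeration fits in memory and that the group action is handled correctly, but TOPCOM already incorporates the symmetry machinery needed for the orbit count, so no new code is required.
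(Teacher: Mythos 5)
Your proposal matches the paper's own justification: the corollary is obtained by a flip-graph traversal with TOPCOM, whose completeness is guaranteed precisely by Theorem \ref{thm.4}, with symmetry classes counted via the order-$384$ hyperoctahedral group action. This is essentially the same approach as the paper, which offers no further argument beyond invoking Theorem \ref{thm.4} and the TOPCOM enumeration.
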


\section{Discussion}

The main result of this paper is the connectedness of $\gamma(\{0,1\}^4)$. Computer assistance was used on the way, in order to obtain proposition \ref{pr.5}. This computer-assisted part consists in checking a property regarding flips over $1\,588$ triangulations of point configuration $\{0,1\}^4/a$. This number is small enough to allow for a fastidious, but possible, verification by hand. However, the computer is certainly more reliable in such a task. In addition, the computer-assisted verification can be carried out within a few minutes. A second important result obtained here is the complete enumeration of the triangulations of $\{0,1\}^4$: $92\,487\,256$ triangulations were found. It was established a few years ago that the number of regular triangulations of $\{0,1\}^4$ is $87\,959\,448$ \cite{Hug08}. Hence, more than $95\%$ of the triangulations of $\{0,1\}^4$ are regular. This does not come as a surprise, since non-regular triangulations of $\{0,1\}^4$ are difficult to find \cite{deL96}.

These observations lead to a natural question. Call a triangulation of a $d$-dimensional point configuration {\it $2$-regular} when its faces are projected from the boundary complex of a $(d+2)$-dimensional polytope. It was found recently that the subgraph induced by $2$-regular triangulations in the flip-graph of a point configuration is connected \cite{Pou12a}, providing new ways to investigate flip-graph connectivity \cite{Pou12b,Pou12c}. Since so few triangulations of $\{0,1\}^4$ are non-regular, it seems natural to conjecture that {\it all the triangulations of $\{0,1\}^4$ are $2$-regular}. Settling this conjecture positively would in addition provide another proof for the connectedness of $\gamma(\{0,1\}^4)$.

\end{document}